\def\journal@id{~}
\def\journal@name{~}
\def\journal@url{~}
\newtheorem{thm}{Theorem}[section]
\newtheorem{cor}[thm]{Corollary}
\newtheorem{lem}[thm]{Lemma}
\newtheorem{prop}[thm]{Proposition}
\theoremstyle{definition}
\newtheorem{defn}[thm]{Definition}
\theoremstyle{remark}
\newtheorem{rem}[thm]{Remark}
\newtheorem*{rem*}{Remark}
\numberwithin{equation}{section}
\DeclareMathOperator{\esssup}{ess\,sup}
\DeclareMathOperator{\essinf}{ess\,inf}
\DeclareMathOperator{\card}{card}
\begin{document}

\begin{frontmatter}

\title{The Universal Glivenko-Cantelli Property}
\runtitle{The Universal Glivenko-Cantelli Property}
\thankstext{T1}{This work was partially supported by NSF grant 
DMS-1005575.}

\begin{aug}
\author{\fnms{Ramon} \snm{van 
Handel}\ead[label=e2]{rvan@princeton.edu}\protect\thanksref{T1}}
\runauthor{Ramon van Handel}
\affiliation{Princeton University}
\address{Sherrerd Hall, Room 227,\\
Princeton University,\\
Princeton, NJ 08544, USA. \\ \printead{e2}}
\end{aug}

\begin{abstract} 
~\newline\noindent
\begin{minipage}{\getattribute{abstract}{width}}
Let $\mathcal{F}$ be a separable uniformly bounded family of measurable
functions on a standard measurable space $(X,\mathcal{X})$, and let
$N_{[]}(\mathcal{F},\varepsilon,\mu)$ be the smallest number of
$\varepsilon$-brackets in $L^1(\mu)$ needed to cover $\mathcal{F}$.
The following are equivalent:
\begin{enumerate}
\item $\mathcal{F}$ is a universal Glivenko-Cantelli class. 
\item $N_{[]}(\mathcal{F},\varepsilon,\mu)<\infty$ for every
$\varepsilon>0$ and every probability measure $\mu$.
\item $\mathcal{F}$ is totally bounded in $L^1(\mu)$
for every probability measure $\mu$.  
\item $\mathcal{F}$ does not contain a
Boolean $\sigma$-independent sequence.
\end{enumerate}
It follows that universal Glivenko-Cantelli classes are uniformity classes
for general sequences of almost surely convergent random measures.
\end{minipage}
\end{abstract}

\begin{keyword}[class=AMS]
\kwd{60F15}     
\kwd{60B10}     
\kwd{41A46}     
\end{keyword}

\begin{keyword}
\kwd{universal Glivenko-Cantelli classes}
\kwd{uniformity classes}
\kwd{uniform convergence of random measures}
\kwd{entropy with bracketing}
\kwd{Boolean independence}
\end{keyword}

\end{frontmatter}

\section{Main results}
\label{sec:intro}

Let $(X,\mathcal{X})$ be a measurable space, and let $\mathcal{F}$ be a 
family of measurable functions on $(X,\mathcal{X})$.  Given a probability 
measure $\mu$ on $(X,\mathcal{X})$, the family $\mathcal{F}$ is 
said to be a \emph{$\mu$-Glivenko-Cantelli class} (cf.\ \cite{Tal87}
or \cite[section 6.6]{Dud99}) if
$$
	\sup_{f\in\mathcal{F}}\left|
	\frac{1}{n}\sum_{k=1}^nf(X_k)-\mu(f)
	\right|\xrightarrow{n\to\infty}0\quad\mbox{a.s.},
$$
where $(X_k)_{k\ge 1}$ is the i.i.d.\ sequence of $X$-valued random 
variables with distribution $\mu$, defined on its canonical product 
probability space.\footnote{
	The supremum in the definition of the $\mu$-Glivenko-Cantelli 
	property need not be measurable in general when the class 
	$\mathcal{F}$ is uncountable.  However, measurability will turn 
	out to hold in the setting of our main results as a consequence 
	of the proofs.  See section \ref{sec:meas} below for 
	further discussion.
} 
The class $\mathcal{F}$ is said to be a \emph{universal Glivenko-Cantelli 
class} if it is $\mu$-Glivenko-Cantelli for every probability measure 
$\mu$ on $(X,\mathcal{X})$.  The goal of this paper is to characterize 
the universal Glivenko-Cantelli property in the case that $\mathcal{F}$ is 
separable and $(X,\mathcal{X})$ is a standard measurable space (these 
regularity assumptions will be detailed below). Somewhat surprisingly, we 
find that universal Glivenko-Cantelli classes are in fact uniformity 
classes for convergence of (random) probability measures under the 
assumptions of this paper, so that their applicability extends 
substantially beyond the setting of laws of large numbers for i.i.d.\ 
sequences that is inherent in their definition.

The following probability-free independence properties for families
of functions will play a fundamental role in this paper.
These notions date back to Marczewski \cite{Mar48} (for sets)
and Rosenthal \cite{Ros74} (for functions, see also \cite{BFT78}).

\begin{defn}
\label{defn:marcz}
A family $\mathcal{F}$ of functions on a set $X$ is said to be
\emph{Boolean independent at levels $(\alpha,\beta)$} if for 
every finite subfamily $\{f_1,\ldots,f_n\}\subseteq\mathcal{F}$
$$
        \bigcap_{j\in F} \{f_j<\alpha\}\cap\bigcap_{j\not\in F} 
	\{f_j>\beta\}
        \ne\varnothing\quad\mbox{for every }F\subseteq\{1,\ldots,n\}.
$$
A sequence $(f_i)_{i\in\mathbb{N}}$ is said to be \emph{Boolean 
$\sigma$-independent at levels $(\alpha,\beta)$} if
$$
        \bigcap_{j\in F} \{f_j<\alpha\}\cap\bigcap_{j\not\in F} 
	\{f_j>\beta\}
        \ne\varnothing\quad\mbox{for every }F\subseteq\mathbb{N}.
$$
A family (sequence) of functions is called Boolean 
($\sigma$-)independent if it is Boolean ($\sigma$-)independent at levels 
$(\alpha,\beta)$ for some $\alpha<\beta$.
\end{defn}

We also recall the well-known notions of bracketing and covering numbers.

\begin{defn}
\label{defn:bracketing}
Let $\mathcal{F}$ be a class of functions on a measurable space
$(X,\mathcal{X})$.   Given $\varepsilon>0$ and a probability measure
$\mu$ on $(X,\mathcal{X})$, a pair of measurable functions $f^+,f^-$
such that $f^-\le f^+$ pointwise and $\mu(f^+-f^-)\le\varepsilon$ defines 
an \emph{$\varepsilon$-bracket in $L^1(\mu)$} $[f^-,f^+]:=
\{f:f^-\le f\le f^+\mbox{ pointwise}\}$.  Denote by 
$N_{[]}(\mathcal{F},\varepsilon,\mu)$ the cardinality of the smallest 
collection of $\varepsilon$-brackets in $L^1(\mu)$ covering
$\mathcal{F}$, and by $N(\mathcal{F},\varepsilon,\mu)$ the cardinality 
of the smallest covering of $\mathcal{F}$ by $\varepsilon$-balls in 
$L^1(\mu)$.
\end{defn}

A measurable space $(X,\mathcal{X})$ is said to be \emph{standard} if it 
is Borel-isomorphic to a Polish space.  A class of functions $\mathcal{F}$ 
on a set $X$ will be said to be \emph{separable} if it contains a 
countable dense subset for the topology of pointwise convergence in 
$\mathbb{R}^X$.\footnote{
This notion of separability is not commonly considered in empirical 
process theory.  A sequential counterpart is more familiar:  $\mathcal{F}$ 
is called pointwise measurable if it contains a countable subset 
$\mathcal{F}_0$ such that every $f\in\mathcal{F}$ is the pointwise limit 
of a sequence in $\mathcal{F}$ (cf.\ \cite[Example 2.3.4]{VW96}).  In 
general, separability is much weaker than pointwise measurability.  
However, a deep result of Bourgain, Fremlin and Talagrand 
\cite[Theorem 4D(viii)$\Rightarrow$(vi)]{BFT78} implies that a separable
uniformly bounded family of measurable functions on a standard space
is necessarily pointwise measurable if it contains no Boolean 
$\sigma$-independent sequence.  Thus universal Glivenko-Cantelli classes
satisfying the assumptions of Theorem \ref{thm:main} below are always 
pointwise measurable, though this is far from obvious a priori.  This fact 
will not be needed in our proofs.
}
We can now formulate our main result.

\begin{thm}
\label{thm:main}
Let $\mathcal{F}$ be a separable uniformly bounded family of measurable
functions on a standard measurable space $(X,\mathcal{X})$.  The 
following are equivalent:
\begin{enumerate}
\item $\mathcal{F}$ is a universal Glivenko-Cantelli class.
\item $N_{[]}(\mathcal{F},\varepsilon,\mu)<\infty$ for every
$\varepsilon>0$ and every probability measure $\mu$.
\item $N(\mathcal{F},\varepsilon,\mu)<\infty$ for every
$\varepsilon>0$ and every probability measure $\mu$.
\item $\mathcal{F}$ contains no Boolean $\sigma$-independent sequence.
\end{enumerate}
\end{thm}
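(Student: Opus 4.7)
My plan is to close the cycle $(4)\Rightarrow(2)\Rightarrow(1)\Rightarrow(3)\Rightarrow(4)$, together with the trivial $(2)\Rightarrow(3)$, which holds because an $\varepsilon$-bracket is contained in the $L^1(\mu)$-ball of radius $\varepsilon$ about its upper function. For $(2)\Rightarrow(1)$ I run the standard bracketing argument: given a finite $\varepsilon$-bracketing $\{[f_i^-,f_i^+]\}_{i\le N}$, every $f\in[f_i^-,f_i^+]$ satisfies
\[
(P_n-P)f_i^- - \varepsilon \le (P_n-P)f \le (P_n-P)f_i^+ + \varepsilon,
\]
so the strong law applied to the finite family $\{f_i^\pm\}$ yields $\limsup_n\sup_{f\in\mathcal{F}}|(P_n-P)f|\le\varepsilon$ almost surely; separability permits the supremum to be taken over a countable pointwise-dense subclass, which inherits the bracketing and handles measurability. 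For $(1)\Rightarrow(3)$ I invoke the classical fact (Dudley--Gin\'e--Zinn) that a $\mu$-Glivenko--Cantelli class must be totally bounded in $L^1(\mu)$: any $L^1(\mu)$-separated sequence $\{g_k\}\subset\mathcal{F}$ would give $\mathbb{E}\|g_i-g_j\|_{L^1(P_n)}=\|g_i-g_j\|_1>\eta$, and a symmetrization/Fubini argument produces positive-probability events on which $\sup_{i,j}|(P_n-P)(g_i-g_j)|\not\to 0$, contradicting $(1)$.

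For $(3)\Rightarrow(4)$ I argue the contrapositive. Given a Boolean $\sigma$-independent sequence $(f_j)\subset\mathcal{F}$ at levels $(\alpha,\beta)$, I build a measure $\mu$ on $X$ for which $\mathcal{F}$ is not totally bounded in $L^1(\mu)$. Since $(X,\mathcal{X})$ is standard and the $f_j$ are Borel, the multifunction sending $\omega\in\{0,1\}^{\mathbb{N}}$ to $\bigcap_{\omega_j=1}\{f_j<\alpha\}\cap\bigcap_{\omega_j=0}\{f_j>\beta\}$ has nonempty analytic values, so Jankov--von Neumann furnishes a universally measurable selector $\Phi\colon\{0,1\}^{\mathbb{N}}\to X$. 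The pushforward $\mu=\Phi_*\nu$ of the Bernoulli$(1/2)$ product $\nu$ makes the events $\{f_j<\alpha\}$ i.i.d.\ with probability $1/2$, whence for $i\ne j$,
\[
\|f_i-f_j\|_1\ge(\beta-\alpha)\bigl(\mu\{f_i<\alpha,f_j>\beta\}+\mu\{f_i>\beta,f_j<\alpha\}\bigr)\ge(\beta-\alpha)/2,
\]
so $\{f_j\}$ is an infinite $L^1(\mu)$-separated subset of $\mathcal{F}$.

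The main obstacle is $(4)\Rightarrow(2)$, for which I intend to use the Bourgain--Fremlin--Talagrand theorem cited in the footnote: under the hypotheses of Theorem~\ref{thm:main}, the absence of a Boolean $\sigma$-independent sequence forces the pointwise closure $\overline{\mathcal{F}}^{\mathrm{pw}}$ to be a Rosenthal compactum (pointwise compact, consisting of Baire class one functions). Dominated convergence then gives that pointwise convergence on $\overline{\mathcal{F}}^{\mathrm{pw}}$ implies $L^1(\mu)$ convergence, and angelicity of Rosenthal compacta promotes this sequential continuity to a continuous embedding into $L^1(\mu)$ with compact image, immediately delivering $(3)$. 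The genuinely hard step is upgrading this compactness to finite \emph{bracketing} numbers, since a naive envelope over an $L^1(\mu)$-ball need not even be integrable. My plan is to partition $\overline{\mathcal{F}}^{\mathrm{pw}}$ recursively into pointwise-closed subclasses $A$, with measurable envelopes $u_A=\sup_{f\in A\cap\mathcal{F}_0}f$ and $\ell_A=\inf_{f\in A\cap\mathcal{F}_0}f$ taken over a countable pointwise-dense $\mathcal{F}_0\subset\mathcal{F}$, splitting any $A$ with $\mu(u_A-\ell_A)>\varepsilon$. The heart of the proof is showing this recursion terminates in finitely many steps: an infinite branch would, via pointwise compactness and dominated convergence, allow extraction of functions from $\mathcal{F}_0$ whose dyadic branching along the tree reproduces a Boolean $\sigma$-independent sequence, contradicting (4). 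Converting ``persistent failure of bracketing'' into an actual $\sigma$-independent sequence is where I expect the main technical work of the proof to lie.
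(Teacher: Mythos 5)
Your cycle $(2)\Rightarrow(1)$, $(2)\Rightarrow(3)$, and $(3)\Rightarrow(4)$ matches the paper in substance: the Blum--DeHardt bracketing argument, the trivial containment of a bracket in a ball, and the construction of a measure making the Boolean $\sigma$-independent sequence stochastically independent (the paper uses the measurable section theorem where you use Jankov--von Neumann, but both rely on $(X,\mathcal{X})$ being standard and are interchangeable). Your $(1)\Rightarrow(3)$ via symmetrization and Sudakov-type lower bounds for the Rademacher process is a genuinely different route from the paper, which instead proves $1\Rightarrow 4$ directly: it builds the i.i.d.\ measure from a Boolean $\sigma$-independent sequence and observes that $\inf_j \frac{1}{n}\sum_k\mathbf{1}_{f_j\le\beta}(X_k)=0$ a.s., which is elementary and avoids any appeal to Rademacher minoration or results imported from the Dudley--Gin\'e--Zinn literature. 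Your route is workable for a countable $\eta$-separated subfamily (measurability is not an issue there), though you would need to fill in the Sudakov/Fubini argument rather than cite it, since the cited sources typically assume stronger measurability than mere separability.

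The genuine gap is $(4)\Rightarrow(2)$, which you yourself flag as "where I expect the main technical work to lie." Your plan --- recursively bisect the Rosenthal compactum $\overline{\mathcal{F}}^{\mathrm{pw}}$ by envelopes and argue that an infinite branch yields a Boolean $\sigma$-independent sequence --- has two concrete problems. First, the envelopes $u_A=\sup_{f\in A\cap\mathcal{F}_0}f$ are ill-defined for pointwise-closed $A\subset\overline{\mathcal{F}}^{\mathrm{pw}}$ that may meet $\mathcal{F}_0$ only sparsely, and even if defined they need not bracket $A$ pointwise. Second, and more fundamentally, nothing in "pointwise compactness of $\overline{\mathcal{F}}^{\mathrm{pw}}$ plus dominated convergence" lets you conclude that the \emph{infinite} intersections $\bigcap_{j\in R}\{f_j<\alpha\}\cap\bigcap_{j\notin R}\{f_j>\beta\}$ are nonempty for every $R\subseteq\mathbb{N}$; at best such a tree construction produces a Boolean \emph{independent} sequence (finite intersections nonempty), which is strictly weaker --- the paper's own example $\mathcal{F}=\{\mathbf{1}_{\{x:x_j=1\}}\}$ on $X=\{x\in\{0,1\}^{\mathbb{N}}:\lim x_n=0\}$ is Boolean independent but not $\sigma$-independent and has finite bracketing numbers, so this distinction cannot be elided. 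The paper resolves exactly this by first constructing a $\mu$-weakly dense set (Proposition~\ref{prop:wkdens}, via a weak-$L^2$ compactness argument when bracketing fails), then using inner regularity of $\mu$ on the Polish space $X$ to replace $\{f_i<\alpha\}$ and $\{f_i>\beta\}$ by \emph{compact} subsets $F_i,G_i$ of positive measure; the compactness of the $F_i,G_i$ in $X$ (not pointwise compactness of the function class) is what forces the infinite intersections to be nonempty via a finite-subcover argument. Your proposal contains no analogue of this step, and without it the implication does not close. Relying on the full Bourgain--Fremlin--Talagrand machinery is also a heavier hammer than needed: the paper deliberately bypasses it, and BFT only yields total boundedness (condition $3$), not the bracketing bound (condition $2$), which is the crux.
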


A notable aspect of this result is that the four equivalent conditions 
of Theorem \ref{thm:main} are quite different in nature: roughly speaking, 
the first condition is probabilistic, the second and third are geometric 
and the fourth is combinatorial.

The implication $1\Rightarrow 2$ in Theorem \ref{thm:main} is the most 
important result of this paper.  A consequence of this implication is that 
universal Glivenko-Cantelli classes can be characterized as uniformity 
classes in a much more general setting.

\begin{cor}
\label{cor:uniformity}
Under the assumptions of Theorem \ref{thm:main}, the following are 
equivalent to the equivalent conditions 1--4 of Theorem \ref{thm:main}:
\begin{enumerate}
\setcounter{enumi}{4}
\item For any probability measure $\mu$ on $(X,\mathcal{X})$ and net of 
probability measures
$(\mu_\tau)_{\tau\in I}$ such that $\mu_\tau\to\mu$ setwise, we have
$\sup_{f\in\mathcal{F}}|\mu_\tau(f)-\mu(f)|\to 0$.
\item For any probability measure $\mu$ on $(X,\mathcal{X})$ and sequence 
of random probability measures (kernels) $(\mu_n)_{n\in\mathbb{N}}$ such 
that $\mu_n(A)\to\mu(A)$ a.s.\ for every $A\in\mathcal{X}$, 
we have $\sup_{f\in\mathcal{F}}|\mu_n(f)-\mu(f)|\to 0$ a.s.
\item For any countably generated reverse filtration 
$(\mathcal{G}_{-n})_{n\in\mathbb{N}}$ and $X$-valued random variable $Z$,
$\sup_{f\in\mathcal{F}}
|\mathbf{P}_{\mathcal{G}_{-n}}(f(Z))-\mathbf{P}_{\mathcal{G}_{-\infty}}
(f(Z))|\to 0$ a.s.
\item For any strictly stationary sequence $(Z_n)_{n\in\mathbb{N}}$ of 
$X$-valued random variables, 
$\sup_{f\in\mathcal{F}}|\frac{1}{n}\sum_{k=1}^nf(Z_k)-
\mathbf{P}_{\mathcal{I}}(f(Z_0))|\to 0$ a.s.\ ($\mathcal{I}$ is the 
invariant $\sigma$-field).
\end{enumerate}
Here $\mathbf{P}_{\mathcal{G}}$ denotes any version
of the regular conditional probability 
$\mathbf{P}[\,\cdot\,|\mathcal{G}]$.
\end{cor}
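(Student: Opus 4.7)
The plan is to close the equivalence chain by combining Theorem \ref{thm:main} with the implications $2\Rightarrow k$ for $k\in\{5,6,7,8\}$ and direct arguments $k\Rightarrow 1$. All four forward implications share a common bracketing reduction. Fix $\varepsilon>0$; using condition 2 with respect to the appropriate reference measure $\mu$ (the distribution of $Z_0$ or $Z$, and the common marginal in 5, 6), choose a finite cover of $\mathcal{F}$ by $\varepsilon$-brackets $[f_i^-,f_i^+]$ in $L^1(\mu)$. Sandwiching $f\in[f_i^-,f_i^+]$ yields
\[
\nu(f_i^-)-\mu(f_i^+)\le \nu(f)-\mu(f)\le \nu(f_i^+)-\mu(f_i^-)
\]
for any probability measure $\nu$, so $\sup_{f\in\mathcal{F}}|\nu(f)-\mu(f)|$ is reduced to $\max_i|\nu(f_i^\pm)-\mu(f_i^\pm)|$ plus a bracket-width term bounded by $\mu(f_i^+-f_i^-)\le\varepsilon$.

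For conditions 5 and 6 the width is deterministic, so it suffices to control the finitely many endpoint convergences: in 5, setwise convergence $\mu_\tau\to\mu$ extends to bounded measurable functions by simple-function approximation; in 6, the scalar SLLN gives $\mu_n(f_i^{\pm})\to\mu(f_i^{\pm})$ outside a finite union of null sets, and letting $\varepsilon\to 0$ along a countable sequence exhausts $\mathcal{F}$ a.s. Conditions 7 and 8 are more delicate because the target $\mathbf{P}_{\mathcal{G}_{-\infty}}(f(Z))$ or $\mathbf{P}_{\mathcal{I}}(f(Z_0))$ is itself random: the endpoint convergences still hold a.s.\ (by reverse-martingale or pointwise ergodic convergence), but the effective bracket-width term becomes the random quantity $\mathbf{P}_\infty((f_i^+-f_i^-)(\cdot))$, which has expectation $\le\varepsilon$ yet need not be pathwise small. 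I would resolve this by selecting a sequence of bracket covers whose widths $\varepsilon_k$ decay fast enough that $\sum_k\mathbf{E}[\max_i\mathbf{P}_\infty((f_i^{+,k}-f_i^{-,k})(\cdot))]<\infty$; Borel--Cantelli then forces the random widths and hence $\limsup_n\sup_{f\in\mathcal{F}}$ to vanish a.s.

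For the reverse direction, each of 6, 7, 8 specializes to an iid sequence $(X_k)$ with distribution $\mu$. For $6\Rightarrow 1$, take the random measure to be the empirical measure $\hat\mu_n=n^{-1}\sum_{k\le n}\delta_{X_k}$; the scalar SLLN gives $\hat\mu_n(A)\to\mu(A)$ a.s. For $8\Rightarrow 1$, iid sequences are stationary with trivial invariant $\sigma$-field by Hewitt--Savage, so $\mathbf{P}_{\mathcal{I}}(f(X_0))=\mu(f)$. For $7\Rightarrow 1$, take $\mathcal{G}_{-n}=\sigma(\hat\mu_n,X_{n+1},X_{n+2},\ldots)$: exchangeability gives $\mathbf{P}_{\mathcal{G}_{-n}}(f(X_1))=\hat\mu_n(f)$, while $\mathcal{G}_{-\infty}$ coincides with the exchangeable/tail $\sigma$-field, again trivial by Hewitt--Savage. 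The last link $5\Rightarrow 1$ does not admit such a specialization because 5 only controls deterministic nets; I would instead close it by contraposition through $5\Rightarrow 3$: if $\mathcal{F}$ is not totally bounded in some $L^1(\mu)$, an $L^1(\mu)$-separated sequence in $\mathcal{F}$ can be converted into a setwise-convergent net of probability measures along which $\sup_{f\in\mathcal{F}}|\nu(f)-\mu(f)|$ stays bounded below, contradicting 5, and Theorem \ref{thm:main} then supplies $3\Leftrightarrow 1$.

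The principal obstacle is the random-bracket-width issue in $2\Rightarrow 7$ and $2\Rightarrow 8$: the clean pathwise bracketing reduction available for 5, 6 fails because the width term becomes a nondeterministic random variable, and this is precisely where the Borel--Cantelli refinement across progressively finer covers is essential. A secondary technical point is the explicit construction of the obstructing net in the contrapositive $5\Rightarrow 3$, which one would expect to build using the Boolean/Rosenthal-type machinery from the main theorem.
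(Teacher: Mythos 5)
Your overall strategy is sound for $2\Rightarrow5,6$ and for the reverse implications $\{6,7,8\}\Rightarrow1$, all of which match the paper, and your contrapositive route for $5\Rightarrow2$ (you write $5\Rightarrow3$) is in the same spirit as the paper's, which explicitly builds a setwise-convergent net $(\mu_\pi^\pm)$ by collapsing each partition atom in the essential boundary to a single point; the paper's construction is driven directly by the failure of the approximation condition in Lemma \ref{lem:approx} rather than by an abstract Boolean-independence argument, but the conclusion is the same.

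The genuine gap is in $2\Rightarrow7,8$, exactly where you flag the difficulty. Your Borel--Cantelli repair is not guaranteed to close it. To make the summability condition $\sum_k \mathbf{E}[\max_i\mathbf{P}_\infty(f_i^{+,k}-f_i^{-,k})]<\infty$ work, the only available estimate is a union bound giving $\mathbf{E}[\max_i\cdots]\le N_{[]}(\mathcal{F},\varepsilon_k,\mu)\,\varepsilon_k$. But there is no uniform or quantitative control of the bracketing number as $\varepsilon\downarrow0$ — indeed Proposition \ref{prop:vc} exhibits a VC class for which $N_{[]}$ blows up arbitrarily quickly — and in general one cannot even arrange $N_{[]}(\mathcal{F},\varepsilon_k,\mu)\,\varepsilon_k\to0$ (e.g.\ $N_{[]}\sim\varepsilon^{-1}$ gives a product bounded away from zero). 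The underlying obstruction is conceptual: brackets chosen to be small in $L^1(\mu)$ need not be small in $L^1$ of a fixed ergodic (or tail) component, and one cannot in general control $\esssup_\omega$ of a max of random bracket widths.

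The paper resolves this differently and more robustly: it \emph{disintegrates} $\mu$ over the invariant (resp.\ tail) $\sigma$-field via the ergodic decomposition (Theorem \ref{thm:ergdecomp}) and a tail-decomposition theorem (Theorem \ref{thm:taildecomp}). Under a.e.\ component $\nu$ the limit measure is nonrandom, and condition 2 is reapplied \emph{to $\nu$ itself} (not to $\mu$), so that the Blum--DeHardt argument goes through componentwise with brackets in $L^1(\nu)$. The price is a measurability reduction (Lemma \ref{lem:ptmsc}, Corollary \ref{cor:countdens}, replacing $\sup_{\mathcal{F}}$ by $\sup_{\mathcal{F}_0}$ over a countable dense $\mathcal{F}_0$) needed to make the decomposition theorems applicable. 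This step is also essentially absent from your outline, and is not a formality: without it, the event whose probability you wish to control is not known to be measurable in the disintegrated picture. In short, your $2\Rightarrow5,6$ and the reverse directions are fine, but $2\Rightarrow7,8$ requires the decomposition-theorem route rather than a pathwise Borel--Cantelli refinement of the $\mu$-brackets.
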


The characterization provided by Theorem \ref{thm:main} and Corollary 
\ref{cor:uniformity} is proved under three regularity assumptions: that 
$\mathcal{F}$ is uniformly bounded and separable, and that 
$(X,\mathcal{X})$ is standard.  It is not difficult to show that any 
universal Glivenko-Cantelli class is uniformly bounded up to additive 
constants (see, for example, \cite[Proposition 4]{DGZ91}), so that the 
assumption that $\mathcal{F}$ is uniformly bounded is not a restriction.  
We will presently argue, however, that without the remaining two 
assumptions a characterization along the lines of this paper cannot be 
expected to hold in general.

In the case that $\mathcal{F}$ is not separable, there are easy 
counterexamples to Theorem \ref{thm:main}. For example, consider the class 
$\mathcal{F}$ consisting of all indicator functions of finite subsets of 
$X$.  It is clear that this class is not $\mu$-Glivenko-Cantelli for any 
nonatomic measure $\mu$, yet condition 3 of Theorem \ref{thm:main} holds. 
Conversely, \cite[section 1.2]{AN10} gives a simple example of a universal 
Glivenko-Cantelli class (in fact, a Vapnik-Chervonenkis class that is 
image admissible Suslin, cf.\ \cite[Corollary 6.1.10]{Dud99}) for which 
condition 8 of Corollary \ref{cor:uniformity}, and therefore condition 2 
of Theorem \ref{thm:main}, are violated.

In the case that $(X,\mathcal{X})$ is not standard, an easy counterexample 
to Theorem \ref{thm:main} is obtained by choosing $X=[0,1]$ and 
$\mathcal{X}=2^X$.  Assuming the continuum hypothesis, nonatomic 
probability measures on $(X,\mathcal{X})$ do not exist \cite[Theorem 
C.1]{Dud02}, so that any uniformly bounded family of functions is 
trivially universal Glivenko-Cantelli.  But we can clearly choose a 
uniformly bounded Boolean $\sigma$-independent sequence $\mathcal{F}$ of 
functions on $X$, in contradiction to Theorem \ref{thm:main}.  This 
example is arguably pathological, but various examples given by Dudley, 
Gin{\'e} and Zinn \cite{DGZ91} show that such phenomena can appear even in 
Polish spaces if we admit universally measurable functions.  Therefore,
in the absence of some regularity assumption on $(X,\mathcal{X})$, the 
universal Glivenko-Cantelli property can be surprisingly broad.  In 
Appendix \ref{sec:counter}, we show that it is consistent with the usual 
axioms of set theory that the implications in Theorem \ref{thm:main} whose 
proof relies on the assumption that $(X,\mathcal{X})$ is standard may fail 
in a general measurable space.  I do not know whether it is possible to 
obtain examples of this type that do not depend on additional 
set-theoretic axioms.

For the case where $(X,\mathcal{X})$ is a general measurable space we will 
prove the following quantitative result, which is of independent interest.

\begin{defn}
\label{defn:shatter}
Let $\gamma>0$.  A family $\mathcal{F}$ of functions on a set $X$ is said 
to \emph{$\gamma$-shatter} a subset $X_0\subseteq X$ if there exist
levels $\alpha<\beta$ with $\beta-\alpha\ge\gamma$ such that, for every 
finite subset $\{x_1,\ldots,x_n\}\subseteq X_0$, the following 
holds:
$$
	\forall\,F\subseteq\{1,\ldots,n\},~~
	\exists\,f\in\mathcal{F}~~
	\mbox{so that}~~
	f(x_j)<\alpha\mbox{ for }j\in F,~~
	f(x_j)>\beta\mbox{ for }j\not\in F.
$$
The \emph{$\gamma$-dimension} of $\mathcal{F}$ is the maximal 
cardinality of $\gamma$-shattered finite subsets of $X$.
\end{defn}

\begin{thm}
\label{thm:scales}
Let $\mathcal{F}$ be a separable uniformly bounded family of 
measurable functions on a measurable space $(X,\mathcal{X})$, 
and let $\gamma>0$.  Consider:
\begin{enumerate}
\renewcommand{\theenumi}{\alph{enumi}}
\item $\mathcal{F}$ has finite $\gamma$-dimension.
\item No sequence in $\mathcal{F}$ is Boolean independent
at levels $(\alpha,\beta)$ with $\beta-\alpha\ge\gamma$.
\item $N_{[]}(\mathcal{F},\varepsilon,\mu)<\infty$ for every
$\varepsilon>\gamma$ and every probability measure $\mu$.
\end{enumerate}
Then the implications $a\Rightarrow b\Rightarrow c$ hold.
\end{thm}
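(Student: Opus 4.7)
For the implication $a\Rightarrow b$, I would argue by contrapositive via a direct combinatorial duality. Given a Boolean independent sequence $(f_i)_{i\in\mathbb{N}}\subseteq\mathcal{F}$ at levels $(\alpha,\beta)$ with $\beta-\alpha\ge\gamma$, I produce $\gamma$-shattered subsets of arbitrary finite size. Fix $n$ and apply Boolean independence to the first $2^n$ functions $f_1,\ldots,f_{2^n}$ to obtain witnesses $x_F\in X$ for each $F\subseteq\{1,\ldots,2^n\}$. After identifying $\{1,\ldots,2^n\}$ with $\{0,1\}^n$, I pick the specific witnesses $x_{S_i}$ corresponding to $S_i=\{j\in\{0,1\}^n : j_i=1\}$, $i=1,\ldots,n$. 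Then $\{x_{S_1},\ldots,x_{S_n}\}$ is $\gamma$-shattered at the levels $(\alpha,\beta)$: for each $F\subseteq\{1,\ldots,n\}$, the function $f_{\chi_F}$ (where $\chi_F\in\{0,1\}^n$ is the characteristic vector of $F$) witnesses the required pattern, since $\chi_F\in S_i\iff(\chi_F)_i=1\iff i\in F$.

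For the implication $b\Rightarrow c$, I would proceed by contrapositive: from $N_{[]}(\mathcal{F},\varepsilon,\mu)=\infty$ for some $\varepsilon>\gamma$ and probability measure $\mu$, extract a Boolean independent sequence in $\mathcal{F}$ at some levels $(\alpha,\beta)$ with $\beta-\alpha\ge\gamma$. The slack $\varepsilon-\gamma>0$ plays a decisive role, and must be spent on discretizing the level parameters. Let $M$ denote the uniform bound on $|f|$ for $f\in\mathcal{F}$; partition $[-M,M]$ into finitely many sub-intervals of mesh $\eta<(\varepsilon-\gamma)/4$, so that any candidate level pair $(\alpha,\beta)$ with $\beta-\alpha\ge\gamma$ can be rounded onto this finite grid with room to spare. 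This reduces the pool of eventual level pairs to a finite set, setting up the subsequent Ramsey/pigeonhole step.

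The heart of the argument is an inductive selection on the combinatorial tree of subsets. Using that no finite family of $\varepsilon$-brackets covers $\mathcal{F}$, I would build a sequence $f_1,f_2,\ldots\in\mathcal{F}$ together with, at each stage $n$, witnesses $x_F\in X$ for $F\subseteq\{1,\ldots,n\}$ exhibiting a Boolean independent pattern at some grid level pair $(\alpha_n,\beta_n)$ with $\beta_n-\alpha_n\ge\gamma$. The inductive step is driven by bracketing failure: any attempt to absorb the tail of $\mathcal{F}$ into a finite family of $\varepsilon$-brackets tailored to the current witness structure must break, producing an escaping $f_{n+1}$ whose disagreement with the previous $f_j$'s has $L^1(\mu)$-mass $>\varepsilon$ and is, by pigeonhole on the $\eta$-grid (using $\varepsilon-\gamma>4\eta$), forced to straddle grid levels with spread $\ge\gamma$ on a set of positive $\mu$-measure. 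A final pigeonhole on the finitely many grid level pairs yields a subsequence along which $(\alpha_n,\beta_n)$ is constant, delivering Boolean independence at a single pair $(\alpha,\beta)$ with $\beta-\alpha\ge\gamma$.

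The main obstacle will be the inductive step: one must translate the failure of finite bracketing into the existence of a next function whose disagreement with the previous structure not only has positive $\mu$-mass but occurs simultaneously at compatible grid levels across all $2^n$ witness classes $F$. Aligning the levels across exponentially many $F$'s is where the bulk of the technical work lies, and is most cleanly achieved by organizing the induction on a binary tree of functions in $\mathcal{F}$ and pruning down to a subtree where every split occurs at a single grid level pair. Separability is used to keep the relevant suprema and infima measurable throughout; uniform boundedness provides the finite level grid; and the strict inequality $\varepsilon>\gamma$ is what ensures the grid remains fine enough to preserve a $\gamma$-wide gap after discretization.
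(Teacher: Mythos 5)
Your argument for $a\Rightarrow b$ is correct and is essentially the paper's argument (Lemma~\ref{lem:assouad}, Assouad duality): you index the $2^n$ Boolean-independent functions by $\{0,1\}^n$, pick the $n$ witnesses $x_{S_i}$ for the coordinate slices $S_i$, and verify that these are $\gamma$-shattered. The paper uses the binary encoding $\ell(F)=1+\sum_{j\in F}2^{j-1}$ and picks a witness $x_j$ for each coordinate $j$; this is the same duality with a different labeling.

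For $b\Rightarrow c$, you correctly identify the target (extract a Boolean independent sequence at a single level pair from the failure of bracketing) and the role of the slack $\varepsilon-\gamma$ in discretizing the levels, but the inductive step you sketch does not go through, and this is precisely where the paper's substantive work lies. Your plan is to find, at stage $n$, a function $f_{n+1}\in\mathcal{F}$ that straddles the levels $(\alpha,\beta)$ simultaneously inside each of the $2^n$ current cells $C_F$. The difficulty is that infinite bracketing for $\mathcal{F}$ on all of $X$ carries no information localized to a \emph{given} positive-measure cell $C_F$: all of $\mathcal{F}$ could be essentially monotone or even constant on some particular $C_F$, and there is nothing in ``a new $f$ escapes every finite family of $\varepsilon$-brackets'' that forces the escaping $f$ to oscillate across $(\alpha,\beta)$ inside that $C_F$, let alone inside all $2^n$ cells at once. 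Your remark that this ``is most cleanly achieved by organizing the induction on a binary tree and pruning'' names the obstruction without resolving it; pruning a binary tree produces nothing unless one has a mechanism guaranteeing that every node can be split, and that mechanism is exactly what is missing. The paper closes this gap by first proving (Lemma~\ref{lem:approx}) that infinite bracketing forces the $\mu$-essential partition boundaries of $\{f<\alpha\}$ vs.\ $\{f>\beta\}$ to stay bounded away from zero in measure for some fixed $(\alpha,\beta)$, and then (Proposition~\ref{prop:wkdens}) applying weak compactness of the unit ball of $L^2(\mu)$ (Alaoglu) along the net of partitions to extract a fixed set $A=\{H>0\}$ of positive measure over which $\mathcal{F}_0$ is $\mu$-weakly dense at levels $(\alpha,\beta)$. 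It is this weak-density property over $A$ — not infinite bracketing per se — that guarantees, for any finitely many positive-$\mu$-measure sets $B_1,\dots,B_p$, a single $f\in\mathcal{F}_0$ straddling $(\alpha,\beta)$ on $A\cap B_i$ for every $i$, which is exactly what the inductive step requires. Until you supply a construction of such a set $A$ (or an equivalent localization device), the induction in your sketch stalls at the very first step where a cell happens to be unfavorable, so $b\Rightarrow c$ is not established.
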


The notion of $\gamma$-dimension appears in Alon et al.\ \cite{ABCH97} 
(called $V_{\gamma/2}$-dimension there). The implication $a\Rightarrow c$ 
of Theorem \ref{thm:scales} contains the recent results of Adams and Nobel 
\cite{AN10a,AN10b,AN10}.  Let us note that condition $b$ is strictly 
weaker than condition $a$: for example, the class 
$\mathcal{F}=\{\mathbf{1}_C:C\mbox{ is a finite subset of }\mathbb{N}\}$ 
has infinite $\gamma$-dimension for $\gamma<1$, but does not contain a 
Boolean independent sequence.  Similarly, condition $c$ is strictly weaker 
than condition $b$: if 
$X=\{x\in\{0,1\}^{\mathbb{N}}:\lim_{n\to\infty}x_n=0\}$ and 
$\mathcal{F}=\{\mathbf{1}_{\{x\in X:x_j=1\}}:j\in\mathbb{N}\}$, then 
$\mathcal{F}$ contains a Boolean independent sequence, but all the 
bracketing numbers are finite as $X$ is countable (note that
$\mathcal{F}$ does not contain a Boolean $\sigma$-independent 
sequence, so there is no contradiction with Theorem \ref{thm:main}).
Condition $b$ is dual 
(in the sense of Assouad \cite{Ass83}) to the nonexistence of a 
$\gamma$-shattered sequence in $X$.  A connection between the latter and 
the universal Glivenko-Cantelli property for families of indicators is 
considered by Dudley, Gin{\'e} and Zinn \cite{DGZ91}.

An interesting question arising from Theorem \ref{thm:scales} is as 
follows.  If $\mathcal{F}$ is uniformly bounded and has finite 
$\gamma$-dimension for all $\gamma>0$, then 
$\sup_{\mu}N(\mathcal{F},\gamma,\mu)<\infty$ for all $\gamma>0$, that is, 
the covering numbers of $\mathcal{F}$ are bounded uniformly with respect 
to the underlying probability measure (see \cite{MV03} for a quantitative 
statement).  If $\mathcal{F}$ is a family of indicators, we have in 
fact the polynomial bound 
$\sup_{\mu}N(\mathcal{F},\varepsilon,\mu)\lesssim \varepsilon^{-d}$
\cite[Theorem 4.6.1]{Dud99}.  In view of Theorem \ref{thm:scales}, one 
might ask whether one can similarly obtain uniform or quantitative bounds 
on the bracketing numbers of $\mathcal{F}$.  Unfortunately, this is not 
the case: $N_{[]}(\mathcal{F},\varepsilon,\mu)$ can blow up arbitrarily 
quickly as $\varepsilon\downarrow 0$.
The following result is based on a combinatorial 
construction of Alon, Haussler, and Welzl \cite{AHW87}.

\begin{prop}
\label{prop:vc}
There exists a countable class $\mathcal{C}$ of subsets of $\mathbb{N}$, 
whose Vapnik-Chervonenkis dimension is two (that is, the 
$\gamma$-dimension of $\{\mathbf{1}_C:C\in\mathcal{C}\}$ is two for all 
$0<\gamma<1$) such that the following holds:
for any function $n(\varepsilon)\uparrow\infty$ as 
$\varepsilon\downarrow 0$, there is a probability measure $\mu$ on 
$\mathbb{N}$ such that $N_{[]}(\mathcal{C},\varepsilon,\mu)\ge 
n(\varepsilon)$ for all $0<\varepsilon<1/3$.
In particular, $\sup_{\mu}N_{[]}(\mathcal{C},\varepsilon,\mu)=\infty$ for 
all $0<\varepsilon<1/3$.
\end{prop}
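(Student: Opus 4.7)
The plan is to construct $\mathcal{C}$ as a countable union $\bigcup_m\mathcal{C}_m$ of finite VC-$2$ subclasses, supplied by the Alon--Haussler--Welzl combinatorial construction~\cite{AHW87} and placed on pairwise disjoint finite blocks $X_m\subseteq\mathbb{N}$. Concretely, I invoke from \cite{AHW87} a sequence of finite range spaces $(Y_m,\mathcal{D}_m)$ of VC dimension $2$, together with probability measures $\nu_m$ on $Y_m$ and scales $\eta_m>0$, such that $N_{[]}(\mathcal{D}_m,\eta_m,\nu_m)\geq m$. Embedding the $Y_m$ into pairwise disjoint $X_m\subseteq\mathbb{N}$ and transferring $\mathcal{D}_m$ to $\mathcal{C}_m\subseteq 2^{X_m}$, I set $\mathcal{C}=\bigcup_m\mathcal{C}_m$. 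Countability is immediate. If a finite $F\subseteq\mathbb{N}$ is shattered by $\mathcal{C}$, then some $C\in\mathcal{C}_{m^\star}$ satisfies $C\cap F=F$, forcing $F\subseteq X_{m^\star}$; hence all shattering occurs within a single block, and the VC dimension of $\mathcal{C}$ equals $\max_m\mathrm{VC}(\mathcal{C}_m)=2$. The $\gamma$-dimension of $\{\mathbf{1}_C:C\in\mathcal{C}\}$ is therefore $2$ for all $0<\gamma<1$.

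Given $n(\varepsilon)\uparrow\infty$, fix scales $\varepsilon_k=3^{-k-1}\downarrow 0$ (with $\varepsilon_1<1/3$). Pick block indices $m_k$ with $m_k\geq n(\varepsilon_{k+1})$ and weights $p_k>0$ summing to $1$ with $p_k\eta_{m_k}\geq\varepsilon_k$; both conditions are met by choosing $m_k$ sufficiently large (so that $\eta_{m_k}$ is correspondingly small) and adjusting the weights. Set $\mu=\sum_k p_k\nu_{m_k}$ as a probability measure on $\mathbb{N}$ via the embeddings. For any $\varepsilon\in(0,1/3)$, let $k$ be the index with $\varepsilon_{k+1}<\varepsilon\leq\varepsilon_k$. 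Any $\varepsilon$-bracket $[A,B]$ of $\mathcal{C}$ in $L^1(\mu)$ restricts, upon intersection with $X_{m_k}$, to a bracket of $\mathcal{C}_{m_k}$ in $L^1(\nu_{m_k})$ of width $\mu((B\setminus A)\cap X_{m_k})/p_k\leq\varepsilon/p_k\leq\eta_{m_k}$. Since $N_{[]}(\mathcal{C}_{m_k},\eta_{m_k},\nu_{m_k})\geq m_k$, any such cover of $\mathcal{C}_{m_k}$ must use at least $m_k\geq n(\varepsilon_{k+1})\geq n(\varepsilon)$ brackets (by monotonicity of $n$ in $\varepsilon$), proving $N_{[]}(\mathcal{C},\varepsilon,\mu)\geq n(\varepsilon)$.

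The main obstacle is the AHW input itself: producing finite VC-$2$ range spaces whose bracketing numbers grow arbitrarily fast with $m$. The naive ``bracketing $\geq L^1$-packing'' inequality, combined with Haussler's polynomial packing bound for VC classes, only yields polynomial-in-$1/\eta_m$ per-block bracketing, which is insufficient when $n(\varepsilon)$ grows super-polynomially. The essential content of the AHW construction is a specific arrangement of sets whose inclusion (order-interval) lattice is substantially richer than its symmetric-difference geometry, so that the bracketing numbers of finite VC-$2$ range spaces are not controlled by any natural covering or packing quantity. Reading off the bracketing lower bound $N_{[]}(\mathcal{D}_m,\eta_m,\nu_m)\geq m$ from the AHW combinatorial design constitutes the technical heart of the proof; once that input is in hand, the diagonal assembly above delivers the proposition.
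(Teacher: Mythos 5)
Your block-assembly scaffolding is sound and in fact mirrors the paper's construction closely: disjoint finite blocks carrying AHW range spaces, a mixture measure $\mu=\sum_k p_k\nu_{m_k}$, the observation that shattering is confined to a single block (hence VC dimension two), and the restriction of a global bracket to a per-block bracket with width blown up by $1/p_k$. All of that is correct as stated. The difficulty is that you have treated the per-block input as a black box, and the black box you invoke is not actually what \cite{AHW87} provides. AHW proves a statement about \emph{partition boundaries} of finite projective planes --- essentially the content of Proposition~\ref{prop:alon} in the paper: for $(V_q,E_q)$ of order $q$ and $m=q^2+q+1$, any partition $\pi$ of $V_q$ with $(\card\pi)^2\le m^{1/2}(1-\varepsilon)$ has some $C\in E_q$ with $\card\partial_\pi C/m>\varepsilon$. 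This is not a bracketing statement, and converting it into one is a genuine (if elementary) step that you have not supplied. The paper does this via Lemma~\ref{lem:crude}: $N$ brackets generate a partition of size at most $3^N$ whose boundaries are small on every covered set; combined with Proposition~\ref{prop:alon} this gives $N_{[]}(E_q,\varepsilon,\mathrm{unif})\gtrsim\tfrac14\log_3 m$ for $\varepsilon$ bounded away from $1$. Your proposal explicitly defers this translation as ``the technical heart,'' which means the argument is incomplete exactly where the paper does its real work.

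A secondary point worth flagging: your description of the scales is off in a way that could have derailed the weight bookkeeping. You write that the per-block thresholds $\eta_{m_k}$ ``are correspondingly small'' when $m_k$ is large, and then ask for $p_k\eta_{m_k}\ge\varepsilon_k$ with $\sum_kp_k=1$; if $\eta_{m_k}\downarrow0$ these constraints would be in tension. In the actual AHW-based bound the threshold does \emph{not} shrink --- the bracketing lower bound holds at any fixed $\varepsilon<1/3$ and simply grows like $\log m$ --- so the correct picture is to fix a single threshold, fix weights $p_k$ in advance (e.g.\ $p_k=2^{-k}$), and then choose the block sizes $m_{j(k)}$ large enough that $\tfrac14\log_3 m_{j(k)}$ beats $n(\varepsilon_k)$. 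That is what the paper does. So: right skeleton, matching the paper's, but with the load-bearing per-block bracketing bound (and the partition-to-bracket reduction it requires) left unproved.
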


Probabilistically, this result has the following consequence.  In contrast 
to the universal Glivenko-Cantelli property, it is known that both the 
uniform Glivenko-Cantelli property and the universal Donsker property are 
equivalent to finiteness of the Vapnik-Chervonenkis dimension for image 
admissible Suslin classes of sets (see \cite{Dud99}, p.\ 225 and p.\ 215, 
respectively). These results are proved using symmetrization arguments. In 
view of Theorem \ref{thm:scales}, one might expect that it is possible to 
provide an alternative proof of these results for separable classes using 
bracketing methods (as in \cite[Chapter 7]{Dud99}). However, this would 
require either uniform or quantitative control of the bracketing numbers, 
both of which are ruled out by Proposition \ref{prop:vc}.

The original motivation of the author was an attempt to characterize 
uniformity classes for reverse martingales that appear in filtering 
theory.  In a recent paper, Adams and Nobel \cite{AN10} showed that 
Vapnik-Chervonenkis classes of sets are uniformity classes for the 
convergence of empirical measures of stationary ergodic sequences; their 
proof could be extended to more general random measures.  A simplified 
argument, which makes the connection with bracketing, appeared 
subsequently in \cite{AN10b}. While attempting to understand the results 
of \cite{AN10}, the author realized that the techniques used in the proof 
are closely related to a set of techniques developed by Bourgain, Fremlin 
and Talagrand \cite{BFT78,Tal84} to study pointwise compact sets of 
measurable functions.  The proof of Theorem \ref{thm:main} is based on 
this elegant theory, which does not appear to be well known in the 
probability literature (however, the proofs of our main results, Theorem 
\ref{thm:main}, Corollary \ref{cor:uniformity}, and Theorem 
\ref{thm:scales}, are intended to be essentially self-contained).

A key innovation in this paper is the construction in section 
\ref{sec:scales} of a ``weakly dense'' set which allows to prove the 
implication $4\Rightarrow 2$ in Theorem \ref{thm:main} (and $b\Rightarrow 
c$ in Theorem \ref{thm:scales}).  This result is the essential step that 
closes the circle of implications in Theorem \ref{thm:main} and Corollary 
\ref{cor:uniformity}. Many of the remaining implications are essentially 
known, albeit in more restrictive settings and/or using significantly more 
complicated proofs: these results are unified here in what appears to be 
(in view the simplicity of the proofs and the counterexamples above and in 
Appendix \ref{sec:counter}) their natural setting.  In a topological 
setting (continuous functions on a compact space), the equivalence of 
$1,3,4$ in Theorem \ref{thm:main} can be deduced by combining 
\cite[Theorem 14-1-7]{Tal84} with Talagrand's characterization of the 
$\mu$-Glivenko-Cantelli property \cite[Theorem 11-1-1]{Tal84}, 
\cite{Tal87} (note that in this setting the distinction between Boolean 
independent and $\sigma$-independent sequences is irrelevant).  The 
equivalence between $3,4$ in Theorem \ref{thm:main} is also obtained in 
\cite[Theorem 4D]{BFT78} by a much more complicated method.  The 
implication $5\Rightarrow 2$ follows from the characterization of 
uniformity classes for setwise convergence of Stute \cite{Stu76} and 
Tops{\o}e \cite{Top77}.  The implications $2\Rightarrow 1,5$--$8$ follow 
from the classical Blum-DeHardt argument, up to measurability problems 
that are resolved here.  Finally, the implication $a\Rightarrow c$ (but 
not $b\Rightarrow c$) of Theorem \ref{thm:scales} is shown in \cite{AN10b} 
for the special case of Vapnik-Chervonenkis classes of sets.  

The remainder of this paper is organized as follows.  We first prove 
Theorem \ref{thm:scales} in section \ref{sec:scales}.  The proofs of 
Theorem \ref{thm:main}, Corollary \ref{cor:uniformity}, and Proposition 
\ref{prop:vc} are subsequently given in sections \ref{sec:main}, 
\ref{sec:uniformity}, and \ref{sec:vc}, respectively.  Finally, Appendix 
\ref{app:boole} and Appendix \ref{app:decomp} develop some properties of 
Boolean $\sigma$-independent sequences and decomposition theorems that are 
used in the proofs of our main results, while Appendix \ref{sec:counter} 
is devoted to the aforementioned counterexamples to Theorem 
\ref{thm:main} in nonstandard spaces.

\section{Proof of Theorem \ref{thm:scales}}
\label{sec:scales}

In this section, we fix a measurable space $(X,\mathcal{X})$ and a 
separable uniformly bounded family of measurable functions $\mathcal{F}$.  
Let $\mathcal{F}_0\subseteq\mathcal{F}$ be a countable family that is 
dense in $\mathcal{F}$ in the pointwise convergence topology.

\begin{defn}
Denote by $\Pi(X,\mathcal{X})$ the collection of all finite measurable 
partitions of $X$.  For $\pi,\pi'\in\Pi(X,\mathcal{X})$, we write 
$\pi\preceq\pi'$ if $\pi$ is finer than $\pi'$.  For any pair of sets
$A,B\in\mathcal{X}$, finite partition $\pi\in\Pi(X,\mathcal{X})$, and 
probability measure $\mu$ on $(X,\mathcal{X})$, define the $\mu$-essential
$\pi$-boundary of $(A,B)$ as
$$
	\partial_\pi^\mu(A,B) = \bigcup\{P\in\pi:
	\mu(P\cap A)>0\mbox{ and }\mu(P\cap B)>0\}.
$$
\end{defn}

We begin by proving an approximation result.

\begin{lem}
\label{lem:approx}
Let $\mu$ be a probability measure on $(X,\mathcal{X})$ and
let $\gamma>0$.  If
$$
	\inf_{\pi\in\Pi(X,\mathcal{X})}\sup_{f\in\mathcal{F}_0}
	\mu\big(\partial_\pi^\mu(\{f<\alpha\},\{f>\beta\})\big)=0
	\quad\mbox{for all}\quad\beta-\alpha\ge\gamma,
$$
then $N_{[]}(\mathcal{F},\varepsilon,\mu)<\infty$ for every
$\varepsilon>\gamma$.
\end{lem}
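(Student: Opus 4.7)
The plan is to assemble a finite family of $\varepsilon$-brackets covering the countable dense family $\mathcal{F}_0\subseteq\mathcal{F}$, and then transfer the cover to $\mathcal{F}$ itself using that pointwise bounds survive pointwise limits. Fix $\varepsilon>\gamma$ and let $M$ be a uniform bound for $\mathcal{F}$. Discretise the range with an equispaced grid $\alpha_0=-M<\alpha_1<\cdots<\alpha_K=M$ of spacing $\eta$, to be chosen small in terms of $\varepsilon-\gamma$, and set $j=\lceil\gamma/\eta\rceil$, so that $\alpha_{k+j}-\alpha_k\ge\gamma$ for every $0\le k\le K-j$.

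The first step is to produce one partition $\pi$ adapted to all of these grid pairs simultaneously. A direct check from the definition shows that if $\pi$ refines $\pi'$ then $\partial_\pi^\mu(A,B)\subseteq\partial_{\pi'}^\mu(A,B)$, so refinement can only shrink the $\mu$-essential boundary. Applying the hypothesis once to each of the finitely many pairs $(\alpha_k,\alpha_{k+j})$ and passing to a common refinement therefore yields $\pi\in\Pi(X,\mathcal{X})$ with
$$
\sup_{f\in\mathcal{F}_0}\mu\!\left(\partial_\pi^\mu(\{f<\alpha_k\},\{f>\alpha_{k+j}\})\right)<\rho, \qquad k=0,\ldots,K-j,
$$
where $\rho>0$ is a parameter to be fixed.

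Quantise each $f\in\mathcal{F}_0$ on each atom $P\in\pi$ by the indices $\ell(P,f)=\max\{k:\mu(P\cap\{f<\alpha_k\})=0\}$ and $u(P,f)=\min\{k:\mu(P\cap\{f>\alpha_k\})=0\}$, so that $\alpha_{\ell(P,f)}\le f\le\alpha_{u(P,f)}$ holds $\mu$-a.e.\ on $P$. A one-line pigeonhole on $k$ gives the dichotomy: either $u-\ell\le j+1$ (and hence $\alpha_u-\alpha_\ell\le(j+1)\eta$), or else $P$ is contained in one of the $K+1$ boundary sets controlled above. Group the elements of $\mathcal{F}_0$ by their assignment $\sigma:P\mapsto(\ell(P,f),u(P,f))$; only finitely many assignments occur. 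For each such $\sigma$, let $N_\sigma$ be the countable union, over the (countably many) $f$'s of assignment $\sigma$ and the atoms $P$, of the $\mu$-null sets $P\cap\{f<\alpha_{\ell(P,f)}\}$ and $P\cap\{f>\alpha_{u(P,f)}\}$, and define step functions
$$
f^-_\sigma=\sum_{P\in\pi}\alpha_{\ell(P)}\mathbf{1}_{P\setminus N_\sigma}-M\,\mathbf{1}_{N_\sigma},\qquad f^+_\sigma=\sum_{P\in\pi}\alpha_{u(P)}\mathbf{1}_{P\setminus N_\sigma}+M\,\mathbf{1}_{N_\sigma},
$$
where $(\ell(P),u(P))=\sigma(P)$. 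Then $f^-_\sigma\le f\le f^+_\sigma$ holds \emph{pointwise} for every $f\in\mathcal{F}_0$ of assignment $\sigma$, and $\mu(f^+_\sigma-f^-_\sigma)\le(j+1)\eta+2M(K+1)\rho$, since $\mu(N_\sigma)=0$ and the bad atoms for $\sigma$ coincide with those of any witness $f$ of assignment $\sigma$. Choosing $\eta$ small, and then $\rho$ small once $K$ is determined, makes this at most $\varepsilon$, giving a finite $\varepsilon$-bracket cover of $\mathcal{F}_0$. An arbitrary $f\in\mathcal{F}$ is a pointwise limit of some $(f_n)\subseteq\mathcal{F}_0$; by pigeonhole on the finite family of brackets, some $[f^-_\sigma,f^+_\sigma]$ contains infinitely many $f_n$, and pointwise inequalities pass to pointwise limits, so $f$ lies in the same bracket.

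The main obstacle is the gap between a.e.\ and pointwise bounds: the indices $\ell,u$ control $f$ only modulo a $\mu$-null set, whereas a bracket demands genuine pointwise inequalities. The device that reconciles the two is to concentrate, \emph{per assignment} rather than per function, the offending $f$-dependent null sets into a single $N_\sigma$ that remains null by countability of $\mathcal{F}_0$, and to collapse the bracket to $\pm M$ there; this preserves pointwise bounds for every $f$ in the group without inflating the $L^1(\mu)$ size of the bracket.
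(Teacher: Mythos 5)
Your construction of the finite bracket cover of $\mathcal{F}_0$ is correct and is essentially the same argument the paper gives: discretise the range, use monotonicity of $\partial_\pi^\mu$ under refinement to get a single partition controlling all grid pairs simultaneously, quantise on atoms via essential sup/inf, and exploit countability of $\mathcal{F}_0$ to sweep the $f$-dependent $\mu$-null exceptional sets into a single null set so that the brackets are genuinely pointwise. The bookkeeping (dichotomy between good and bad atoms, the bound $(j+1)\eta+2M(K+1)\rho$, the order in which $\eta$ and $\rho$ are chosen) all checks out.

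There is, however, a genuine gap in the final transfer from $\mathcal{F}_0$ to $\mathcal{F}$. You write that an arbitrary $f\in\mathcal{F}$ is a pointwise limit of some \emph{sequence} $(f_n)\subseteq\mathcal{F}_0$ and then pigeonhole over the finitely many brackets. But the standing hypothesis is only that $\mathcal{F}_0$ is dense in $\mathcal{F}$ for the topology of pointwise convergence on $\mathbb{R}^X$, which in general gives only convergence along a \emph{net}, not a sequence; the paper's footnote in Section \ref{sec:intro} explicitly warns that this notion of separability is strictly weaker than the sequential ``pointwise measurable'' property you are implicitly invoking. The conclusion you want is nonetheless true, and the repair is short: each bracket $[f^-_\sigma,f^+_\sigma]$ is closed in the topology of pointwise convergence (it is an order interval, a ``box''), a finite union of closed sets is closed, and $\mathcal{F}$ lies in the pointwise closure of $\mathcal{F}_0$, which is contained in that finite union. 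Equivalently, one can run a net version of your pigeonhole: a net with values in a finite union $\bigcup_i A_i$ must be frequently in some $A_i$, yielding a subnet in $A_i$ converging to the same limit. Either fix aligns your proof with the paper's, which invokes exactly the fact that topological closure commutes with finite unions.
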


\begin{proof}
There is clearly no loss of generality in assuming that every
$f\in\mathcal{F}$ takes values in $\mbox{}[0,1]\mbox{}$
and that $\gamma<1$.  
Fix $k\ge 1$, and let $\delta:=\gamma/k$.  Choose 
$\pi\in\Pi(X,\mathcal{X})$ so that 
$$
	\sup_{f\in\mathcal{F}_0}
	\mu\left(\Xi(f)\right)
	<\delta,\qquad
	\Xi(f):=
	\bigcup_{1\le j\le \lfloor\delta^{-1}\rfloor}
	\partial_\pi^\mu(\{f<j\delta\},
	\{f>j\delta+\gamma\}).
$$
For each $f\in\mathcal{F}_0$, define the functions $f^+$ and $f^-$ as 
follows:
\begin{align*}
	f^+ &= \delta\,\lceil\delta^{-1}\rceil\,
	\mathbf{1}_{\Xi(f)} + \sum_{P\in\pi:P\not\subseteq\Xi(f)}
	\delta\,\lceil 
	\delta^{-1}\esssup_{P}f\rceil\,\mathbf{1}_P,
	\\
	f^- &= \sum_{P\in\pi:P\not\subseteq\Xi(f)}
	\delta\,\lfloor 
	\delta^{-1}\essinf_{P}f\rfloor\,\mathbf{1}_P.
\end{align*}
Here $\esssup_Pf$ ($\essinf_Pf$) denotes the essential supremum (infimum) 
of $f$ on the set $P$ with respect to $\mu$.  By construction,
$f^-\le f\le f^+$ outside a $\mu$-null set and $\mu(f^+-f^-) < 
\gamma+3\delta$.  Moreover, as $f^+,f^-$ are
constant on each $P\in\pi$ and take values in the finite set 
$\{j\delta:0\le j\le \lceil\delta^{-1}\rceil\}$, there is only a finite 
number of such functions.  As $\mathcal{F}_0$ is countable, we can 
eliminate the null set to obtain a finite number of 
$(\gamma+3\delta)$-brackets in $L^1(\mu)$ covering $\mathcal{F}_0$.  But 
$\mathcal{F}_0$ is pointwise dense in $\mathcal{F}$, so 
$N_{[]}(\mathcal{F},\gamma+3\delta,\mu)<\infty$, and we may choose 
$\delta=\gamma/k$ arbitrarily small.
\end{proof}

To proceed, we need the notion of a ``weakly dense'' set, which is the 
measure-theoretic counterpart of the corresponding topological notion 
defined in \cite{BFT78}.

\begin{defn}
\label{defn:wkdens}
Given a measurable set $A\in\mathcal{X}$ and a probability measure
$\mu$ on $(X,\mathcal{X})$, the family of functions $\mathcal{F}$ is said 
to be \emph{$\mu$-weakly dense over $A$ at levels $(\alpha,\beta)$} if
$\mu(A)>0$ and for any finite collection of measurable sets 
$B_1,\ldots,B_p\in\mathcal{X}$ such that $\mu(A\cap B_i)>0$ for
all $1\le i\le p$, there exists $f\in\mathcal{F}$ such that
$\mu(A\cap B_i\cap\{f<\alpha\})>0$ and $\mu(A\cap B_i\cap\{f>\beta\})>0$
for all $1\le i\le p$.
\end{defn}

The key idea of this section, which lies at the heart of the results in 
this paper, is that we can construct such a set if the bracketing numbers 
fail to be finite.  The proof is straightforward but requires some 
elementary topological notions: the reader unfamiliar with nets is 
referred to the classic text \cite{Kel55}, while weak compactness of the 
unit ball in $L^2$ follows from Alaoglu's theorem \cite[Theorem 
V.3.1]{Con85}.

\begin{prop}
\label{prop:wkdens}
Suppose there exists a probability measure $\mu$ on $(X,\mathcal{X})$
such that $N_{[]}(\mathcal{F},\varepsilon,\mu)=\infty$ for some
$\varepsilon>\gamma$.  Then there exist $\alpha<\beta$ with
$\beta-\alpha\ge\gamma$ and a measurable set $A\in\mathcal{X}$ such that
$\mathcal{F}_0$ is $\mu$-weakly dense over $A$ at levels $(\alpha,\beta)$.
\end{prop}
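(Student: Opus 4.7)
The approach is to invoke the contrapositive of Lemma \ref{lem:approx} and then extract $A$ via a weak-compactness argument in $L^2(\mu)$. By Lemma \ref{lem:approx}, the hypothesis $N_{[]}(\mathcal{F},\varepsilon,\mu)=\infty$ for some $\varepsilon>\gamma$ yields levels $\alpha<\beta$ with $\beta-\alpha\ge\gamma$ and a constant $\eta>0$ such that, for every $\pi\in\Pi(X,\mathcal{X})$, one can pick $f_\pi\in\mathcal{F}_0$ with $\mu(\partial_\pi^\mu(\{f_\pi<\alpha\},\{f_\pi>\beta\}))>\eta$. Fix these $(\alpha,\beta)$ and this choice function.

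\textbf{Extraction of $A$.} Consider the net $e_\pi:=\mathbf{1}_{\partial_\pi^\mu(\{f_\pi<\alpha\},\{f_\pi>\beta\})}$ indexed by $\Pi(X,\mathcal{X})$ directed by refinement, viewed inside the weakly compact unit ball of $L^2(\mu)$ (Alaoglu). Extract a weakly convergent subnet $(e_{\pi_j})_{j\in J}$ with limit $e\in L^2(\mu)$; weak closedness of the closed convex set $\{0\le g\le 1\}$ gives $0\le e\le 1$ a.e., and testing against $\mathbf{1}_X$ gives $\int e\,d\mu\ge\eta$. Set $A:=\{e>0\}$, so $\mu(A)>0$.

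\textbf{Verifying weak density.} Given $B_1,\ldots,B_p\in\mathcal{X}$ with $\mu(A\cap B_i)>0$ for all $i$, weak convergence tested against $\mathbf{1}_{A\cap B_i}$ yields $\int_{A\cap B_i}e_{\pi_j}\,d\mu\to\int_{A\cap B_i}e\,d\mu>0$, since $e>0$ on $A\cap B_i\subseteq A$. Let $\pi^*$ be the finite measurable partition of $X$ generated by $A\cap B_1,\ldots,A\cap B_p$; cofinality of the subnet, together with the finitely many tail conditions ``$\pi_j$ refines $\pi^*$'' and ``$\mu(A\cap B_i\cap\partial_{\pi_j}^\mu(\{f_{\pi_j}<\alpha\},\{f_{\pi_j}>\beta\}))>0$ for all $i$'', produce by directedness of $J$ a single index $j$ meeting all of them. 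For this $\pi_j$ each $A\cap B_i$ is a union of cells of $\pi_j$, so the positivity above forces some cell $P\subseteq A\cap B_i$ of $\pi_j$ to lie in the essential boundary of $f_{\pi_j}$, giving $\mu(A\cap B_i\cap\{f_{\pi_j}<\alpha\})>0$ and $\mu(A\cap B_i\cap\{f_{\pi_j}>\beta\})>0$. Thus $f:=f_{\pi_j}\in\mathcal{F}_0$ is the required splitting function.

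\textbf{Main obstacle.} The decisive design choice is $A=\{e>0\}$ rather than a thicker level set, since this is the largest set for which $\int_{A\cap B}e\,d\mu>0$ holds for every positive-measure $B\subseteq A$---exactly the input needed to apply the weak-convergence criterion to arbitrary $B_i$. The subsequent coordination of the finitely many tail conditions in $J$ is then routine directed-set combinatorics; one should also check that $(\alpha,\beta)$ and $A$ may be retained from the beginning (rather than rechosen for each $(B_i)$), which is automatic from the construction.
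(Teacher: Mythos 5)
Your proposal is correct and follows essentially the same route as the paper: apply Lemma \ref{lem:approx} in contrapositive form to obtain levels $(\alpha,\beta)$ and a positive infimum, take a weakly convergent subnet of the boundary indicators in $L^2(\mu)$, define $A$ as the strict positivity set of the limit, and then pass along the subnet to a partition refining the one generated by the test sets to find the splitting function. The cosmetic differences (using the partition generated by the sets $A\cap B_i$ rather than by $A,B_1,\ldots,B_p$, a uniform $\eta$ rather than the half-supremum choice) are immaterial.
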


\begin{proof}
By Lemma \ref{lem:approx}, there exist $\alpha<\beta$ with
$\beta-\alpha\ge\gamma$ such that
$$
	\inf_{\pi\in\Pi(X,\mathcal{X})}\sup_{f\in\mathcal{F}_0}
	\mu\big(\partial_\pi^\mu(\{f<\alpha\},\{f>\beta\})\big)>0.
$$
Choose for every $\pi\in\Pi(X,\mathcal{X})$ a function
$f_\pi\in\mathcal{F}_0$ such that 
$$
	\mu\big(\partial_\pi^\mu(\{f_\pi<\alpha\},\{f_\pi>\beta\})\big)
	\ge \frac{1}{2}
	\sup_{f\in\mathcal{F}_0}
	\mu\big(\partial_\pi^\mu(\{f<\alpha\},\{f>\beta\})\big).
$$
Define $A_\pi:=\partial_\pi^\mu(\{f_\pi<\alpha\},\{f_\pi>\beta\})$.  Then
$(\mathbf{1}_{A_\pi})_{\pi\in\Pi(X,\mathcal{X})}$ is a net of random 
variables in the unit ball of $L^2(\mu)$.  By weak compactness, there is 
for some directed set $T$ a subnet 
$(\mathbf{1}_{A_{\pi(\tau)}})_{\tau\in T}$ that
converges weakly in $L^2(\mu)$ to a random variable $H$.  We claim that 
$\mathcal{F}_0$ is $\mu$-weakly dense over $A:=\{H>0\}$ at levels 
$(\alpha,\beta)$.

To prove the claim, let us first note that as $\inf_\pi\mu(A_\pi)>0$, 
clearly $\mu(A)>0$.  Now fix $B_1,\ldots,B_p\in\mathcal{X}$ such that
$\mu(A\cap B_i)>0$ for all $i$.  This trivially implies 
that $\mu(H\mathbf{1}_{A\cap B_i})>0$ for all $i$, so we can
choose $\tau_0\in T$ such that
$$
	\mu(A_{\pi(\tau)}\cap A\cap B_i)>0\quad
	\forall\,1\le i\le p,~\tau\preceq\tau_0.
$$
Let $\pi_0$ be the partition generated by $A,B_1,\ldots,B_p$, and choose 
$\tau^*\in T$ such that $\tau^*\preceq\tau_0$ and 
$\pi^*:=\pi(\tau^*)\preceq\pi_0$.  As $A\cap B_i$ is a union of atoms of 
$\pi^*$ by construction, $\mu(A_{\pi^*}\cap A\cap B_i)>0$ must imply
that $A\cap B_i$ contains an atom $P\in\pi^*$ such that 
$\mu(P\cap\{f_{\pi^*}<\alpha\})>0$ and
$\mu(P\cap\{f_{\pi^*}>\beta\})>0$.  Therefore
$$
	\mu(A\cap B_i\cap\{f_{\pi^*}<\alpha\})>0\quad
	\mbox{and}\quad
	\mu(A\cap B_i\cap\{f_{\pi^*}>\beta\})>0
	\quad\forall\,i.
$$
Thus $\mathcal{F}_0$ is $\mu$-weakly dense over $A$ at levels
$(\alpha,\beta)$ as claimed.
\end{proof}

We can now complete the proof of Theorem \ref{thm:scales}.

\begin{proof}[Theorem \ref{thm:scales}]
~

$a\Rightarrow b$: Lemma \ref{lem:assouad} in Appendix \ref{app:boole}
shows that if $\mathcal{F}$ contains a subset of cardinality $2^n$ 
that is Boolean independent at levels $(\alpha,\beta)$ with 
$\beta-\alpha\ge\gamma$, then $\mathcal{F}$ $\gamma$-shatters a subset of 
$X$ of cardinality $n$.  Therefore, if condition $b$ fails, there exist 
$\gamma$-shattered finite subsets of $X$ of arbitrarily large cardinality, 
in contradiction with condition $a$.

$b\Rightarrow c$: Suppose that condition $c$ fails.  By Proposition 
\ref{prop:wkdens}, there exist a probability measure $\mu$, levels 
$\alpha<\beta$ with $\beta-\alpha\ge\gamma$, and a set 
$A\in\mathcal{X}$ so that $\mathcal{F}_0$ is $\mu$-weakly dense 
over $A$ at levels $(\alpha,\beta)$.  We now iteratively apply 
Definition \ref{defn:wkdens} to construct a Boolean independent sequence.  
Indeed, applying first the definition with
$p=1$ and $B_1=X$, we choose $f_1\in\mathcal{F}_0$ so that
$\mu(A\cap\{f_1<\alpha\})>0$ and $\mu(A\cap\{f_1>\beta\})>0$.
Then applying the definition with $p=2$ and $B_1=\{f_1<\alpha\}$,
$B_2=\{f_1>\beta\}$, we choose $f_2\in\mathcal{F}_0$ so that
$\mu(A\cap\{f_1<\alpha\}\cap\{f_2<\alpha\})>0$,
$\mu(A\cap\{f_1<\alpha\}\cap\{f_2>\beta\})>0$,
$\mu(A\cap\{f_1>\beta\}\cap\{f_2<\alpha\})>0$, and
$\mu(A\cap\{f_1>\beta\}\cap\{f_2>\beta\})>0$.  Repeating this procedure
yields the desired sequence $(f_i)_{i\in\mathbb{N}}$.
\end{proof}

\section{Proof of Theorem \ref{thm:main}}
\label{sec:main}

Throughout this section, we fix a standard measurable space 
$(X,\mathcal{X})$ and a separable uniformly bounded family of measurable 
functions $\mathcal{F}$.  We will prove Theorem \ref{thm:main} by
proving the implications $1\Rightarrow 4\Rightarrow 2\Rightarrow 1$
and $2\Rightarrow 3\Rightarrow 4$.  

\subsection{$1\Rightarrow 4$}

Suppose there exists a sequence 
$(f_i)_{i\in\mathbb{N}}\subseteq\mathcal{F}$ that is Boolean 
$\sigma$-independent at levels $(\alpha,\beta)$ for some $\alpha<\beta$.  
Clearly we must have 
$$
	\kappa_-<\alpha<\beta<\kappa_+,\qquad
	\kappa_-:= \inf_{f\in\mathcal{F}}\inf_{x\in X}f(x),\quad
	\kappa_+:= \sup_{f\in\mathcal{F}}\sup_{x\in X}f(x).
$$
Let $p=(\kappa_+-\beta+\varepsilon)/(\kappa_+-\alpha)$, where
we choose $\varepsilon>0$ such that $p<1$.  Applying Theorem 
\ref{thm:marczewski} in Appendix \ref{app:boole} to the sets
$A_i=\{f_i<\alpha\}$ and $B_i=\{f_i>\beta\}$, there exists a 
probability measure $\mu$ on $(X,\mathcal{X})$ such that
$(\{f_i<\alpha\})_{i\in\mathbb{N}}$ is an i.i.d.\ sequence of
sets with $\mu(\{f_i<\alpha\})=\mu(X\backslash\{f_i>\beta\})=p$ for every
$i\in\mathbb{N}$.

We now claim that $\mathcal{F}$ is not $\mu$-Glivenko-Cantelli, which 
yields the desired contradiction.  To this end, note that we can trivially 
estimate for any $f\in\mathcal{F}$
$$
	\beta\,\mathbf{1}_{f>\beta}+\kappa_-\,\mathbf{1}_{f\le\beta}
	\le
	f \le \alpha\,\mathbf{1}_{f<\alpha}+\kappa_+\,\mathbf{1}_{f\ge\alpha}.
$$
We therefore have
\begin{align*}
	\sup_{f\in\mathcal{F}}\left|
	\frac{1}{n}\sum_{k=1}^nf(X_k)-\mu(f)
	\right| &\mbox{}\ge
	\sup_{j\in\mathbb{N}}
	\frac{1}{n}\sum_{k=1}^n\{f_j(X_k)-\mu(f_j)\} \\
	&\mbox{}\ge
	(\kappa_--\beta)
	\inf_{j\in\mathbb{N}}
	\frac{1}{n}\sum_{k=1}^n
	\mathbf{1}_{f_j\le\beta}(X_k)
	+\varepsilon.
\end{align*}
But if $(X_k)_{k\ge 1}$ are i.i.d.\ with distribution $\mu$ then, by 
construction, the family of random variables 
$\{\mathbf{1}_{f_j\le\beta}(X_k):j,k\in\mathbb{N}\}$ is i.i.d.\ with 
$\mathbf{P}[\mathbf{1}_{f_j\le\beta}(X_k)=0]>0$, so
$$
	\inf_{j\in\mathbb{N}}
	\frac{1}{n}\sum_{k=1}^n
	\mathbf{1}_{f_j\le\beta}(X_k) = 0 \quad\mbox{a.s.}\quad
	\mbox{for all }n\in\mathbb{N}.	
$$
Thus $\mathcal{F}$ is not a $\mu$-Glivenko-Cantelli class.  This 
completes the proof.

\subsection{$4\Rightarrow 2$}

Suppose there exists a probability measure $\mu$ and $\varepsilon>0$ such 
that $N_{[]}(\mathcal{F},\varepsilon,\mu)=\infty$.  By Proposition 
\ref{prop:wkdens}, there exist levels 
$\alpha<\beta$ and a set $A\in\mathcal{X}$ such that 
$\mathcal{F}$ is $\mu$-weakly dense over $A$ at levels $(\alpha,\beta)$.
We will presently construct a Boolean $\sigma$-independent sequence, 
which yields the desired contradiction.  The idea is to repeat the proof 
of Theorem \ref{thm:scales}, but now exploiting the fact that 
$(X,\mathcal{X})$ is standard to ensure that the 
infinite intersections in the definition of Boolean $\sigma$-independence
are nonempty.

As $(X,\mathcal{X})$ is standard, we may assume without loss of 
generality that $X$ is Polish and that $\mathcal{X}$ is the Borel
$\sigma$-field.  Thus $\mu$ is inner regular.  We now apply Definition 
\ref{defn:wkdens} as follows.  First, setting $p=1$ and $B_1=X$, choose 
$f_1\in\mathcal{F}$ such that 
$$
	\mu(A\cap\{f_1<\alpha\})>0,\qquad\mu(A\cap\{f_1>\beta\})>0.
$$
As $\mu$ is inner regular, we may choose compact sets 
$F_1\subseteq\{f_1<\alpha\}$ and $G_1\subseteq\{f_1>\beta\}$ such that 
$\mu(A\cap F_1)>0$ and $\mu(A\cap F_2)>0$.  Applying the definition with 
$p=2$, $B_1=F_1$, and $B_2=G_1$, we can choose $f_2\in\mathcal{F}$ such 
that 
\begin{align*}
	&\mu(A\cap F_1\cap\{f_2<\alpha\})>0,
	&\mu(A\cap F_1\cap\{f_2>\beta\})>0, \\
 	&\mu(A\cap G_1\cap\{f_2<\alpha\})>0,
	&\mu(A\cap G_1\cap\{f_2>\beta\})>0.
\end{align*}  
Using again inner regularity, we can now choose compact sets 
$F_2\subseteq\{f_2<\alpha\}$ and $G_2\subseteq\{f_2>\beta\}$ such that 
$\mu(A\cap F_1\cap F_2)>0$, $\mu(A\cap F_1\cap G_2)>0$, $\mu(A\cap G_1\cap 
F_2)>0$, and $\mu(A\cap G_1\cap G_2)>0$.  Iterating the above steps, we 
construct a sequence of functions  
$(f_i)_{i\in\mathbb{N}}\subseteq\mathcal{F}$
and compact sets $(F_i)_{i\in\mathbb{N}}$, $(G_i)_{i\in\mathbb{N}}$ such 
that $F_i\subseteq\{f_i<\alpha\}$, $G_i\subseteq\{f_i>\beta\}$ for
every $i\in\mathbb{N}$, and for any $n\in\mathbb{N}$
$$
	\mu\left(\bigcap_{j\in Q}F_j\cap\bigcap_{j\in
	\{1,\ldots,n\}\backslash Q}G_j
	\right)>0\quad\mbox{for every }Q\subseteq\{1,\ldots,n\}.
$$
Now suppose that the sequence $(f_i)_{i\in\mathbb{N}}$ is not Boolean
$\sigma$-independent.  Then
$$
        \bigcap_{j\in R} \{f_j<\alpha\}\cap\bigcap_{j\not\in R} 
	\{f_j>\beta\} = \varnothing	
$$
for some $R\subseteq\mathbb{N}$.  Thus we certainly have
$$
        \bigcap_{j\in R} F_j\cap\bigcap_{j\not\in R} 
	G_j = \varnothing.
$$
Choose arbitrary $\ell\in R$ (if $R$ is the empty set, replace $F_\ell$ by 
$G_1$ throughout the following argument).  Then clearly $\{X\backslash 
F_j:j\in R\}\cup\{X\backslash G_j:j\not\in R\}$ is an open cover of 
$F_\ell$.  Therefore, there exist finite subsets $Q_1\subseteq R$,
$Q_2\subseteq\mathbb{N}\backslash R$ such that $\{X\backslash    
F_j:j\in Q_1\}\cup\{X\backslash G_j:j\in Q_2\}$ covers $F_\ell$.
But then
$$
        F_\ell\cap\bigcap_{j\in Q_1} F_j\cap\bigcap_{j\in Q_2} 
	G_j = \varnothing,
$$
a contradiction.  Thus $(f_i)_{i\in\mathbb{N}}$ is Boolean 
$\sigma$-independent at levels $(\alpha,\beta)$.

\subsection{$2\Rightarrow 1$}
\label{sec:blum}

This is the usual Blum-DeHardt argument, included here for 
completeness.  Fix a probability measure $\mu$ and $\varepsilon>0$,
and suppose that $N_{[]}(\mathcal{F},\varepsilon,\mu)<\infty$.
Choose $\varepsilon$-brackets $[f_1,g_1],\ldots,[f_N,g_N]$ in $L^1(\mu)$ 
covering $\mathcal{F}$.  Then
\begin{multline*}
	\sup_{f\in\mathcal{F}}|\mu_n(f)-\mu(f)| =
	\sup_{f\in\mathcal{F}}\{\mu_n(f)-\mu(f)\} \vee
	\sup_{f\in\mathcal{F}}\{\mu(f)-\mu_n(f)\}  \\
	\le
	\max_{i=1,\ldots,N}\{\mu_n(g_i)-\mu(f_i)\} \vee
	\max_{i=1,\ldots,N}\{\mu(g_i)-\mu_n(f_i)\} ,
\end{multline*}
where we define the empirical measure $\mu_n:=\frac{1}{n}\sum_{k=1}^n
\delta_{X_k}$ for an i.i.d.\ sequence $(X_k)_{k\in\mathbb{N}}$ with 
distribution $\mu$.  The right hand side in the above expression is 
measurable and converges a.s.\ to a constant not exceeding $\varepsilon$ 
by the law of large numbers.  As $\varepsilon>0$ and $\mu$ were arbitrary,
$\mathcal{F}$ is universal Glivenko-Cantelli.

\subsection{$2\Rightarrow 3\Rightarrow 4$}

As $N(\mathcal{F},\varepsilon,\mu)\le 
N_{[]}(\mathcal{F},2\varepsilon,\mu)$, the implication $2\Rightarrow 3$ is 
trivial.  It therefore remains to prove the implication $3\Rightarrow 4$.

To this end, suppose that there exists a sequence 
$(f_i)_{i\in\mathbb{N}}\subseteq\mathcal{F}$ that is Boolean 
$\sigma$-independent at levels $(\alpha,\beta)$ for some $\alpha<\beta$.
Construct the probability measure $\mu$ as in the proof of the implication
$1\Rightarrow 4$.  We claim that $N(\mathcal{F},\varepsilon,\mu)=\infty$
for $\varepsilon>0$ sufficiently small, which yields the desired 
contradiction.

To prove the claim, it suffices to note that for any $i\ne j$
\begin{align*}
	\mu(|f_i-f_j|) &\ge
	\mu(|f_i-f_j|\mathbf{1}_{f_j<\alpha}\mathbf{1}_{f_i>\beta}) \\
	&\ge (\beta-\alpha)\,
	\mu(\{f_j<\alpha\}\cap\{f_i>\beta\}) =
	(\beta-\alpha)p(1-p)>0
\end{align*}
by the construction of $\mu$.  Therefore $\mathcal{F}$ contains an 
infinite set of $(\beta-\alpha)p(1-p)$-separated points in $L^1(\mu)$,
so $N(\mathcal{F},(\beta-\alpha)p(1-p)/2,\mu)=\infty$.

\subsection{A remark about a.s.\ convergence and measurability}
\label{sec:meas}

When the class $\mathcal{F}$ is only assumed to be separable, the quantity
$$
	\Gamma_n(\mathcal{F},\mu):=
	\sup_{f\in\mathcal{F}}\left|\frac{1}{n}
	\sum_{k=1}^nf(X_k)-\mu(f)\right|
$$
may well be nonmeasurable.  For nonmeasurable functions, there are 
inequivalent notions of convergence that coincide with a.s.\ convergence 
in the measurable case.  In this paper, following Talagrand \cite{Tal87}, 
we defined $\mu$-Glivenko-Cantelli classes as those for which the quantity 
$\Gamma_n(\mathcal{F},\mu)$ converges to zero a.s., that is, pointwise 
outside a set of probability zero.  A different definition, given by 
Dudley \cite[section 3.3]{Dud99}, is to require that
$\Gamma_n(\mathcal{F},\mu)$ converges to zero almost uniformly, that is, 
it is dominated by a sequence of measurable random variables converging to 
zero a.s.

For nonmeasurable functions, almost uniform convergence is in general much 
stronger than a.s.\ convergence.  Nonetheless, in the fundamental paper 
characterizing the $\mu$-Glivenko-Cantelli property, Talagrand showed
\cite[Theorem 22]{Tal87} that for $\mu$-Glivenko-Cantelli classes a.s.\ 
convergence already implies almost uniform convergence.  Thus this is 
certainly the case for universal Glivenko-Cantelli classes.  In the 
setting of Theorem \ref{thm:main}, the latter can also be seen directly:
indeed, the proof of the implication $1\Rightarrow 4$ requires only a.s.\ 
convergence, while the Blum-DeHardt argument $2\Rightarrow 1$ 
automatically yields the stronger notion of almost uniform convergence.

However, let us note that in Corollary \ref{cor:countdens} below we will 
prove an even stronger property: for separable uniformly bounded classes 
$\mathcal{F}$ with finite bracketing numbers, the quantity 
$\sup_{f\in\mathcal{F}}|\nu(f)-\rho(f)|$ is Borel-measurable for arbitrary 
random probability measures $\nu,\rho$. Thus $\Gamma_n(\mathcal{F},\mu)$ 
is automatically measurable for universal Glivenko-Cantelli classes 
satisfying the assumptions of Theorem \ref{thm:main}, though this is far 
from obvious a priori.  Similarly, if any of the equivalent conditions of 
Theorem \ref{thm:main} or Corollary \ref{cor:uniformity} holds, then all 
the suprema in Corollary \ref{cor:uniformity} are measurable. It follows 
that a.s.\ and almost uniform convergence coincide trivially in our main 
results.

\section{Proof of Corollary \ref{cor:uniformity}}
\label{sec:uniformity}

Throughout this section, we fix a standard measurable space 
$(X,\mathcal{X})$ and a separable uniformly bounded family of measurable 
functions $\mathcal{F}$.  We will prove Corollary \ref{cor:uniformity} by 
proving the implications $2\Leftrightarrow 5$ and $2\Rightarrow 
\{6,7,8\}\Rightarrow 1$.  The implication $5\Rightarrow 2$ is related to a 
result of Tops{\o}e \cite{Top77}, though we give here a direct proof 
inspired by Stute \cite{Stu76}.  The remaining implications are 
straightforward modulo measurability issues.

\subsection{$2\Leftrightarrow 5$}

The implication $2\Rightarrow 5$ follows from the Blum-DeHardt argument as 
in section \ref{sec:blum}.  Conversely,
suppose that condition 2 does not hold, so that
$N_{[]}(\mathcal{F},\varepsilon,\mu)=\infty$
for some $\varepsilon>0$ and probability measure $\mu$.  Then by
Lemma \ref{lem:approx}, there exist $\delta>0$ and $\alpha<\beta$ 
such that we can choose for every $\pi\in\Pi(X,\mathcal{X})$ a function
$f_\pi\in\mathcal{F}$ with
$$
	\mu(D_\pi)\ge\delta,\qquad\quad
	D_\pi:=\partial_\pi^\mu(\{f_\pi<\alpha\},\{f_\pi>\beta\}).
$$
We now define for every $\pi\in\Pi(X,\mathcal{X})$ two probability 
measures $\mu_\pi^+,\mu_\pi^-$ as follows.  For every $P\in\pi$ such that
$P\subseteq D_\pi$, choose two points $x_P^+\in P\cap\{f_\pi>\beta\}$ 
and $x_P^-\in P\cap\{f_\pi<\alpha\}$ arbitrarily, and define for 
every $A\in\mathcal{X}$
$$
	\mu_\pi^\pm(A) = 
	\mu(A\backslash D_\pi) +
	\sum_{P\in\pi:P\subseteq D_\pi}
	\mu(P)\,\mathbf{1}_A(x_P^\pm).
$$
Then $(\mu_\pi^\pm)_{\pi\in\Pi(X,\mathcal{X})}$ is a net of
probability measures that converges to $\mu$ setwise: indeed,
for every $A\in\mathcal{X}$, we have
$\mu_\pi^\pm(A)=\mu(A)$ whenever $\pi\preceq\pi_A$ with
$\pi_A=\{A,X\backslash A\}$.  On the other hand, by construction we
have 
$$
	\sup_{f\in\mathcal{F}}|\mu_\pi^+(f)-\mu_\pi^-(f)|
	\ge |\mu_\pi^+(f_\pi)-\mu_\pi^-(f_\pi)| \ge
	(\beta-\alpha)\mu(D_\pi)\ge(\beta-\alpha)\delta
$$
for every $\pi\in\Pi(X,\mathcal{X})$.  Therefore either
$(\mu_\pi^+)_{\pi\in\Pi(X,\mathcal{X})}$ or
$(\mu_\pi^-)_{\pi\in\Pi(X,\mathcal{X})}$ does not converge 
to $\mu$ uniformly over $\mathcal{F}$, in contradiction to condition 5.

\subsection{$2\Rightarrow\{6,7,8\}$}

The implication $2\Rightarrow 6$ follows immediately from the Blum-DeHardt 
argument as in section \ref{sec:blum}.  The complication for the 
implications $2\Rightarrow\{7,8\}$ is that the limiting measure is a 
random measure (unlike $2\Rightarrow 6$ where the limiting measure is 
nonrandom).  Intuitively one can simply condition on 
$\mathcal{G}_{-\infty}$ or $\mathcal{I}$, respectively, so that the 
problem reduces to the implication $2\Rightarrow 6$ under the conditional 
measure.  The main work in the proof consists of resolving the 
measurability issues that arise in this approach.

Let $\mathcal{F}_0\subseteq\mathcal{F}$ be a countable family that is 
dense in $\mathcal{F}$ in the topology of pointwise convergence.  
We first show that $\mathcal{F}_0$ is also $L^1(\mu)$-dense in 
$\mathcal{F}$ for any $\mu$: this is not obvious, as the dominated 
convergence theorem does not hold for nets.

\begin{lem}
\label{lem:ptmsc}
If $N_{[]}(\mathcal{F},\varepsilon,\mu)<\infty$ for all
$\varepsilon>0$, then $\mathcal{F}_0$ is $L^1(\mu)$-dense in 
$\mathcal{F}$.
\end{lem}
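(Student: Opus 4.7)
The plan is to exploit the bracketing hypothesis to convert the pointwise approximation afforded by $\mathcal{F}_0$ into an $L^1(\mu)$ approximation. The subtlety (which the lemma highlights) is that a net $(g_\tau)\subseteq\mathcal{F}_0$ converging pointwise to $f\in\mathcal{F}$ need not converge to $f$ in $L^1(\mu)$: dominated convergence requires sequences, and bounded pointwise limits of nets are not even measurable in general. Finite bracketing will force every pointwise-convergent net (on a cofinal piece) to lie inside a single common bracket, which supplies the missing $L^1$ control for free.

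Fix $f\in\mathcal{F}$ and $\varepsilon>0$. By hypothesis, choose finitely many $\varepsilon$-brackets $[f_1^-,f_1^+],\ldots,[f_N^-,f_N^+]$ in $L^1(\mu)$ covering $\mathcal{F}$, and partition $\mathcal{F}_0=\bigsqcup_{i=1}^N\mathcal{F}_0^i$ by assigning each $g\in\mathcal{F}_0$ to some bracket $[f_i^-,f_i^+]$ containing it. By pointwise density of $\mathcal{F}_0$ in $\mathcal{F}$, there is a net $(g_\tau)_{\tau\in T}\subseteq\mathcal{F}_0$ with $g_\tau(x)\to f(x)$ for every $x\in X$.

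Now comes the pigeonhole step on the directed set $T$: writing $T_i=\{\tau\in T:g_\tau\in\mathcal{F}_0^i\}$, since $T=\bigcup_{i=1}^N T_i$ is a finite union and $T$ is directed, at least one $T_{i^*}$ is cofinal in $T$ (otherwise an upper bound of witnesses $\tau_1^0,\ldots,\tau_N^0$ past which each $T_i$ is absent would belong to no $T_i$). Restricting to the subnet indexed by $T_{i^*}$ preserves the pointwise limit $f$. Each term of this subnet satisfies $f_{i^*}^-\le g_\tau\le f_{i^*}^+$ pointwise, so passing to the pointwise limit yields $f_{i^*}^-\le f\le f_{i^*}^+$ pointwise as well. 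Consequently, for any single $\tau\in T_{i^*}$,
\[
|f-g_\tau|\le f_{i^*}^+-f_{i^*}^-\quad\text{pointwise},
\]
so $\mu(|f-g_\tau|)\le\mu(f_{i^*}^+-f_{i^*}^-)\le\varepsilon$. Since $\varepsilon>0$ is arbitrary, $f$ lies in the $L^1(\mu)$-closure of $\mathcal{F}_0$.

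The only delicate point is the cofinality argument: one must avoid pretending that a net admits a pointwise-convergent sequential extraction, and instead observe that the finite bracket partition of $\mathcal{F}_0$ funnels a cofinal portion of the net into one bracket. Once this is in hand, the bracket width $\varepsilon$ is a deterministic, function-free bound on $|f-g_\tau|$ that sidesteps dominated convergence entirely.
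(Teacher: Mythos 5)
Your proof is correct and follows essentially the same route as the paper. The paper's one-line invocation of the fact that topological closure commutes with finite unions is precisely what your cofinality/pigeonhole argument establishes: if $f$ is a pointwise limit of a net in $\mathcal{F}_0 = \bigcup_i \left([f_i^-,f_i^+]\cap\mathcal{F}_0\right)$, then some $[f_{i^*}^-,f_{i^*}^+]\cap\mathcal{F}_0$ carries a cofinal subnet, so $f$ lies in the pointwise closure of that piece and hence (brackets being pointwise-closed) in $[f_{i^*}^-,f_{i^*}^+]$ itself. From there both proofs finish identically by bounding $|f-g|$ by the bracket width. Your write-up is a more explicit, self-contained rendering of the same mechanism, and you are right to flag that the cofinality step is exactly what replaces the (unavailable) dominated convergence for nets.
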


\begin{proof}
Fix $\varepsilon>0$, and choose 
$\varepsilon$-brackets $[f_1,g_1],\ldots,[f_N,g_N]$ in $L^1(\mu)$
covering $\mathcal{F}$.  As topological closure and finite unions commute,
for every $f\in\mathcal{F}$ there exists $1\le i\le N$ such that $f$
is in the pointwise closure of $[f_i,g_i]\cap\mathcal{F}_0$.  But then
clearly $f\in[f_i,g_i]$, and choosing any $g\in 
[f_i,g_i]\cap\mathcal{F}_0$ we have 
$\mu(|f-g|)\le\mu(g_i-f_i)\le\varepsilon$.  As $\varepsilon>0$ is 
arbitrary, the proof is complete.
\end{proof}

We can now reduce the suprema in conditions $7$ and $8$ to countable 
suprema. 

\begin{cor}
\label{cor:countdens}
Suppose that $N_{[]}(\mathcal{F},\varepsilon,\mu)<\infty$ for every
$\varepsilon>0$ and probability measure $\mu$.  Then for any pair
of probability measures $\mu,\nu$ we have
$$
	\sup_{f\in\mathcal{F}}|\mu(f)-\nu(f)| =
	\sup_{f\in\mathcal{F}_0}|\mu(f)-\nu(f)|.
$$
In particular, this holds when $\mu$ and $\nu$ are random measures.
\end{cor}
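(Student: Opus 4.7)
The plan is to deduce the equality from Lemma \ref{lem:ptmsc} by choosing an auxiliary probability measure that controls $\mu$ and $\nu$ simultaneously. The crucial feature to exploit is that the dense subset $\mathcal{F}_0$ was fixed at the start of Section \ref{sec:uniformity} as a pointwise dense family independent of any probability measure, so by Lemma \ref{lem:ptmsc} (applied under the hypothesis of the corollary) the same countable $\mathcal{F}_0$ is $L^1(\rho)$-dense in $\mathcal{F}$ simultaneously for every probability measure $\rho$.

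For the deterministic case, I would set $\rho=(\mu+\nu)/2$ and, given $f\in\mathcal{F}$ and $\varepsilon>0$, pick $g\in\mathcal{F}_0$ with $\rho(|f-g|)<\varepsilon$. This simultaneously bounds $\mu(|f-g|)$ and $\nu(|f-g|)$, and a one-line triangle inequality then dominates $|\mu(f)-\nu(f)|$ by $\sup_{h\in\mathcal{F}_0}|\mu(h)-\nu(h)|+O(\varepsilon)$. Taking the supremum over $f\in\mathcal{F}$ and sending $\varepsilon\downarrow 0$ gives one direction; the reverse inequality is trivial since $\mathcal{F}_0\subseteq\mathcal{F}$.

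For random probability measures (kernels), I would apply the deterministic equality pointwise in $\omega$ to $\mu(\omega),\nu(\omega)$ --- this is legitimate precisely because the single $\mathcal{F}_0$ serves every probability measure on $(X,\mathcal{X})$. Since $\mathcal{F}_0$ is countable and $\omega\mapsto\mu(\omega)(f)-\nu(\omega)(f)$ is measurable for each bounded measurable $f$ (by the definition of a kernel and a routine approximation), the right-hand side is Borel-measurable as a countable supremum, and the pointwise equality then transfers measurability to the a priori nonmeasurable left-hand side as a bonus, which is the stronger property advertised in Section \ref{sec:meas}. I foresee no substantive obstacle; the only point requiring care is the measure-independence of $\mathcal{F}_0$ in Lemma \ref{lem:ptmsc}, which is immediate from its introduction at the outset of the section.
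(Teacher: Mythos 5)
Your proposal is correct and follows essentially the same route as the paper: both set $\rho=(\mu+\nu)/2$, invoke Lemma \ref{lem:ptmsc} to get $L^1(\rho)$-density of the measure-independent $\mathcal{F}_0$, bound $|\mu(f)-\nu(f)|$ by the corresponding supremum over $\mathcal{F}_0$ plus a small error via the triangle inequality, and handle random measures by applying the deterministic equality pointwise. The only cosmetic difference is that the paper passes through an explicit maximizing sequence $(f_n)$ rather than bounding each $f$ directly and sending $\varepsilon\downarrow 0$, which is logically equivalent.
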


\begin{proof}
Fix (nonrandom) probability measures $\mu,\nu$, and define
$\rho=\{\mu+\nu\}/2$.  Then $\mathcal{F}_0$ is $L^1(\rho)$-dense
in $\mathcal{F}$ by Lemma \ref{lem:ptmsc}.  In particular, for every
$f\in\mathcal{F}$ and $\varepsilon>0$, we can choose $g\in\mathcal{F}_0$ 
such that $\mu(|f-g|)+\nu(|f-g|)\le \varepsilon$.  Now let 
$(f_n)_{n\in\mathbb{N}}\subseteq\mathcal{F}$ be a sequence such that
$
	\sup_{f\in\mathcal{F}}|\mu(f)-\nu(f)|=
	\lim_{n\to\infty}|\mu(f_n)-\nu(f_n)|.
$
For each $f_n$, choose $g_n\in\mathcal{F}_0$ such that
$\mu(|f_n-g_n|)+\nu(|f_n-g_n|)\le n^{-1}$.  Then
$$
	\sup_{f\in\mathcal{F}}|\mu(f)-\nu(f)|=
	\lim_{n\to\infty}|\mu(g_n)-\nu(g_n)|\le
	\sup_{f\in\mathcal{F}_0}|\mu(f)-\nu(f)|,
$$
which clearly yields the result (as $\mathcal{F}_0\subseteq\mathcal{F}$).  
In the case of random probability measures, we simply apply the nonrandom 
result pointwise.
\end{proof}

To prove $2\Rightarrow 8$ we use the ergodic decomposition (cf.\ 
Appendix \ref{app:decomp}). Consider a stationary sequence 
$(Z_n)_{n\in\mathbb{N}}$ of $X$-valued random 
variables on a probability space $(\Omega,\mathcal{G},\mathbf{P})$.  Using 
Corollary \ref{cor:countdens} and the ergodic theorem, it suffices to 
prove that
$$
	\mathbf{P}\left[
	\limsup_{n\to\infty}
	\sup_{f\in\mathcal{F}_0}\left|
	\frac{1}{n}\sum_{k=1}^nf(Z_k)-
	\limsup_{N\to\infty}
	\frac{1}{N}\sum_{k=1}^N f(Z_k)
	\right|=0
	\right]=1.
$$
The event inside the probability is an 
$\mathcal{X}^{\otimes\mathbb{N}}$-measurable function of
$(Z_n)_{n\in\mathbb{N}}$.  Therefore, by Theorem 
\ref{thm:ergdecomp} in Appendix \ref{app:decomp}, it suffices to prove
the result for the case that $(Z_n)_{n\in\mathbb{N}}$ is 
stationary and ergodic.  But in the ergodic case
$\frac{1}{N}\sum_{k=1}^N f(Z_k)\to\mathbf{E}(f(Z_0))$ a.s., 
so that the result follows from the Blum-DeHardt argument.

To prove the implication $2\Rightarrow 7$, we aim to repeat the proof of 
$2\Rightarrow 8$ with a suitable tail decomposition (cf.\ Theorem
\ref{thm:taildecomp} in Appendix \ref{app:decomp}).  On an 
underlying probability space $(\Omega,\mathcal{G},\mathbf{P})$, let 
$(\mathcal{G}_{-n})_{n\in\mathbb{N}}$ be a reverse filtration such that 
$\mathcal{G}_{-n}\subseteq\mathcal{G}$ is countably generated for each 
$n\in\mathbb{N}$, and consider a random variable $Z$ taking values in the 
standard space $(X,\mathcal{X})$.
Using Corollary \ref{cor:countdens} and the reverse
martingale convergence theorem, it evidently suffices to prove that
$$
	\mathbf{P}\left[
	\limsup_{n\to\infty}
	\sup_{f\in\mathcal{F}_0}\left|
	\mathbf{E}(f(Z)|\mathcal{G}_{-n})-
	\limsup_{N\to\infty}
	\mathbf{E}(f(Z)|\mathcal{G}_{-N})
	\right|=0
	\right]=1.
$$
If $(\Omega,\mathcal{G})$ is standard, then by Theorem 
\ref{thm:taildecomp} it suffices to 
prove the result for the case that the tail $\sigma$-field 
$\mathcal{G}_{-\infty}=\bigcap_n\mathcal{G}_{-n}$ is trivial.
But in that case
$\mathbf{E}(f(Z)|\mathcal{G}_{-n})\to\mathbf{E}(f(Z))$ a.s.,
so that the result follows from the Blum-DeHardt argument.

It therefore remains to show that there is no loss of generality in 
assuming that $(\Omega,\mathcal{G})$ is standard.  To this end, choose for 
every $n\ge 1$ a countable generating class 
$(H_{n,j})_{j\in\mathbb{N}}\subseteq\mathcal{G}_{-n}$, and define the 
$\{0,1\}^\mathbb{N}$-valued random variable 
$Z_{-n}=(\mathbf{1}_{H_{n,j}})_{j\in\mathbb{N}}$.  Then, by construction, 
$\mathcal{G}_{-n}=\sigma\{Z_{-k}:k\ge n\}$.  If we define $Z_0=Z$, then it 
is clear that the implication $2\Rightarrow 7$ depends only on the law of 
$(Z_{-n})_{n\ge 0}$.  There is therefore no loss of generality in assuming 
that $(\Omega,\mathcal{G})$ is the canonical space of the process 
$(Z_{-n})_{n\ge 0}$, which is clearly standard as $\{0,1\}^{\mathbb{N}}$ 
is Polish.

\subsection{$\{6,7,8\}\Rightarrow 1$}

These implications follow from the fact that each of the conditions 
$\{6,7,8\}$ contains condition $1$ as a special case.  For the implication 
$6\Rightarrow 1$, it suffices to choose $\mu_n$ to be the empirical 
measure of an i.i.d.\ sequence with distribution $\mu$. Similarly, the 
implication $8\Rightarrow 1$ follows from the fact that an i.i.d.\ 
sequence is stationary and ergodic.  Finally, the implication
$7\Rightarrow 1$ follows from the following well known construction.
Let $(X_k)_{k\in\mathbb{N}}$ be an i.i.d.\ sequence of $X$-valued
random variables with distribution $\mu$, let $Z=X_1$, and let 
$\mathcal{G}_{-n}=\sigma\{\sum_{k=1}^n\mathbf{1}_A(X_k):A\in\mathcal{X}\}$.  
As $(X,\mathcal{X})$ is standard, $\mathcal{X}$ and hence 
$\mathcal{G}_{-n}$ are countably generated.  Moreover, we have 
$$
	\mathbf{E}(f(Z)|\mathcal{G}_{-n})=
	\mathbf{E}(f(X_\ell)|\mathcal{G}_{-n})=
	\frac{1}{n}\sum_{k=1}^n\mathbf{E}(f(X_k)|\mathcal{G}_{-n})=
	\frac{1}{n}\sum_{k=1}^nf(X_k)
$$ 
for any bounded measurable function $f$ and $1\le\ell\le n$, 
as the right hand side is $\mathcal{G}_{-n}$-measurable and every element 
of $\mathcal{G}_{-n}$ is symmetric under permutations of 
$\{X_1,\ldots,X_n\}$.  Therefore, $\frac{1}{n}\sum_{k=1}^n\delta_{X_k}$ is 
a version of the regular conditional probability 
$\mathbf{P}(Z\in\,\cdot\,|\mathcal{G}_{-n})$ for every $n\ge 1$. By the 
law of large numbers and the martingale convergence theorem, it follows 
that $\mu$ is a version of the regular conditional probability 
$\mathbf{P}(Z\in\,\cdot\,|\mathcal{G}_{-\infty})$.  The implication 
$7\Rightarrow 1$ is now immediate.

\section{Proof of Proposition \ref{prop:vc}}
\label{sec:vc}

The construction of the class $\mathcal{C}$ in Proposition \ref{prop:vc} 
is based on a combinatorial construction due to Alon, Haussler, and Welzl 
\cite[Theorem A(2)]{AHW87}.  We begin by recalling the essential results 
in that paper, and then proceed to the proof of Proposition \ref{prop:vc}.

\subsection{Construction}

Let $q\ge 2$ be a prime number, and denote by $\mathbb{F}_q$ the finite 
field $\mathbb{Z}/q\mathbb{Z}$ of order $q$.  In the following, we 
consider the three-dimensional vector space $\mathbb{F}_q^3$ over the 
finite field $\mathbb{F}_q$.  Denote by $V_q$ the family of all 
one-dimensional subspaces of $\mathbb{F}_q^3$, and denote by $E_q$ the 
family of all two-dimensional subspaces of $\mathbb{F}_q^3$. Each element 
of $E_q$ is identified with a subset of $V_q$ by inclusion, that is,
a two-dimensional subspace $C\in E_q$ is identified with the set of 
one-dimensional subspaces $x\in V_q$ contained in it. An elementary 
counting argument, cf.\ \cite[section 9.3]{Cam94}, yields the 
following properties:
\begin{enumerate}
\item $\card V_q=\card E_q=q^2+q+1$.
\item Every set $C\in E_q$ contains exactly $q+1$ points in $V_q$.
\item Every point $x\in V_q$ belongs to exactly $q+1$ sets in $E_q$.
\item For every $x,x'\in V_q$, $x\ne x'$ there is a unique set
$C\in E_q$ with $x,x'\in C$.
\end{enumerate}
A pair $(V_q,E_q)$ with these properties is called a \emph{finite 
projective plane} of order $q$.  For our purposes, the key property
of finite projective planes is the following result due to
Alon, Haussler, and Welzl, whose proof is given in \cite[p.\ 336]{AHW87}
(the proof is based on a combinatorial lemma proved in 
\cite[Theorem 2.1(2)]{Alon85}).

\begin{prop}
\label{prop:alon}
Let $q\ge 2$ be prime, define $m=q^2+q+1$, and let $\varepsilon>0$.
Then for any partition $\pi$ of $V_q$ such that
$(\card\pi)^2\le m^{1/2}(1-\varepsilon)$, we have
$$
	\max_{C\in E_q}
	\frac{\card\partial_\pi C}{m}>\varepsilon.
$$
Here we defined the $\pi$-boundary
$\partial_\pi C:=\bigcup\{P\in\pi:P\cap C\ne\varnothing\mbox{ and }
P\not\subseteq C\}$.
\end{prop}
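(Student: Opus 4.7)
The plan is to follow Alon, Haussler, and Welzl \cite{AHW87}, reducing the claim to a second-moment estimate that exploits the regularity of the finite projective plane. Writing $k := \card\pi$, I would lower-bound $\sum_{C \in E_q}\card\partial_\pi C$ by more than $\varepsilon m^2$; the conclusion $\max_C \card\partial_\pi C > \varepsilon m$ then follows by averaging.

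The essential inputs are two double-counting identities, valid for any $P \subseteq V_q$:
\[
\sum_{C \in E_q}\card(P\cap C) = \card P\,(q+1), \qquad
\sum_{C \in E_q}\card(P\cap C)^2 = \card P\,(q+\card P).
\]
The first uses that every point of $V_q$ lies in exactly $q+1$ lines; the second uses additionally that every pair of distinct points lies on a unique line. Applying Cauchy-Schwarz to these identities gives, for any $P$ with $\card P \ge 2$,
\[
s(P) := \card\{C \in E_q : P\cap C \ne \varnothing,\ P \not\subseteq C\} \;\ge\; \frac{\card P\,(q+1)^2}{q+\card P} - 1,
\]
where the $-1$ accounts for the at most one line that can contain $P$ entirely.

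Next I would aggregate using Jensen's inequality applied to the function $g(x) = x^2/(q+x)$. This function is convex on $(0,\infty)$ since $g(x) = (x-q) + q^2/(q+x)$, the last term being a positive convex decreasing function. Using $\sum_{P\in\pi}\card P = m$, Jensen yields $\sum_{P} g(\card P) \ge k\,g(m/k) = m^2/(qk+m)$, and hence
\[
\sum_{C\in E_q}\card\partial_\pi C = \sum_{P\in\pi}\card P \cdot s(P) \;\ge\; \frac{(q+1)^2\,m^2}{qk+m} - m.
\]
Using $(q+1)^2 = m+q$ together with the hypothesis $k^2 \le m^{1/2}(1-\varepsilon)$ (which, since $q < m^{1/2}$, implies $qk \le m^{3/4}(1-\varepsilon)^{1/2}$), one obtains the average bound
\[
\frac{1}{m}\sum_{C\in E_q}\card\partial_\pi C \;\ge\; m - m^{3/4}(1-\varepsilon)^{1/2} - O(1).
\]
The inequality $m - m^{3/4}(1-\varepsilon)^{1/2} > \varepsilon m$ rearranges to $(1-\varepsilon)^{1/2}m^{1/4} > 1$, i.e.\ $(1-\varepsilon)m^{1/2} > 1$, which is implied by the hypothesis as soon as $k \ge 2$; the trivial case $k=1$ (in which $\partial_\pi C = V_q$ for every $C$) is handled directly.

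The hard part is arranging the constants so that the naive Cauchy-Schwarz and Jensen estimates above are tight enough to recover the sharp threshold $m^{1/2}(1-\varepsilon)$ on $(\card\pi)^2$ throughout the full range $\varepsilon \in (0,1)$; in particular, the $-1$ correction in the bound on $s(P)$ and the regime where $\varepsilon$ is near $1$ require the refined combinatorial lemma of Alon \cite[Theorem 2.1(2)]{Alon85}, which is what \cite{AHW87} invokes to close the argument cleanly.
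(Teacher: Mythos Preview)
The paper does not supply its own proof of this proposition: it simply states that the proof is in \cite[p.~336]{AHW87} and rests on the combinatorial lemma \cite[Theorem 2.1(2)]{Alon85}. Your sketch is exactly along those lines---the second-moment/double-counting argument in the projective plane followed by a convexity estimate---and you correctly identify that the sharp threshold $(\card\pi)^2\le m^{1/2}(1-\varepsilon)$ requires Alon's refined lemma rather than the naive Cauchy--Schwarz/Jensen bound.

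One technical point worth flagging in your direct argument: for singleton parts $P=\{p\}$ one has $s(P)=0$ (since $P\cap C\ne\varnothing$ is equivalent to $P\subseteq C$), whereas your lower bound $s(P)\ge |P|(q+1)^2/(q+|P|)-1$ would give $s(P)\ge q$. You restrict that bound to $|P|\ge 2$, but the subsequent Jensen step over all $P\in\pi$ implicitly reintroduces singletons with a positive contribution. This is a minor bookkeeping issue (singletons number at most $k\le m^{1/4}$, so their effect is lower order), and in any case your deferral to \cite{Alon85} for the sharp form is consistent with how the paper itself handles the proposition.
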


We now proceed to construct the class $\mathcal{C}$ in Proposition 
\ref{prop:vc}.  Let $q_j\uparrow\infty$ be an increasing sequence of 
primes ($q_j\ge 2$), and define $m_j=q_j^2+q_j+1$.  We now partition
$\mathbb{N}$ into consecutive blocks of length $m_j$, as follows:
$$
	\mathbb{N} = \bigcup_{j=1}^\infty N_j,\qquad
	N_j=\left\{
	\sum_{i=1}^{j-1}m_i+1,\ldots,\sum_{i=1}^jm_i
	\right\}\simeq V_{q_j}.
$$
Define $\mathcal{C}$ as the disjoint union of copies of $E_{q_j}$
defined on the blocks $N_j$: that is, choose for every $j$ a 
bijection $\iota_j:V_{q_j}\to N_j$, and define
$$
	\mathcal{C}=\bigcup_{j=1}^\infty\mathcal{C}_j,\qquad
	\mathcal{C}_j = \{ B\subseteq N_j :
	\iota_j^{-1}(B)\in E_{q_j}\}.
$$
We claim that the countable class $\mathcal{C}$ of subsets of $\mathbb{N}$ 
has $\gamma$-dimension two.  

\begin{lem}
$\mathcal{C}$ has Vapnik-Chervonenkis dimension two.
\end{lem}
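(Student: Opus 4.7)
The plan is to establish both bounds $\mathrm{VC}(\mathcal{C}) \ge 2$ and $\mathrm{VC}(\mathcal{C}) \le 2$, relying only on the finite projective plane axioms 1--4 recalled above. Since every $C \in \mathcal{C}$ lies in a single block $N_j$, no set in $\mathcal{C}$ can contain points from two distinct blocks, and the argument naturally splits into a ``same block'' case and a ``different blocks'' case.

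For the lower bound, I would pick $x_1, x_2 \in N_1$ (using $q_1 \ge 2$). Property 4 gives a unique $C \in \mathcal{C}_1$ with $\{x_1, x_2\} \subseteq C$, realizing the trace $\{x_1, x_2\}$. Property 3 says $x_1$ lies in $q_1 + 1$ sets, one of which contains $x_2$, so $q_1$ sets give the trace $\{x_1\}$; symmetrically for $\{x_2\}$. Finally, inclusion-exclusion with properties 1 and 3 shows $m_1 - 2(q_1+1) + 1 = q_1^2 - q_1 \ge 2$ sets contain neither point, realizing $\varnothing$. Thus $\{x_1, x_2\}$ is shattered.

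For the upper bound, I would take any three distinct points $x_1, x_2, x_3 \in \mathbb{N}$ and show they are not shattered. If they lie in distinct blocks (at least two of them), then no element of $\mathcal{C}$ can contain both of two points from different blocks, so the trace $\{x_1, x_2, x_3\}$ (or the relevant pair crossing blocks) cannot be realized. If all three points lie in the same $N_j$, let $C_{ij} \in \mathcal{C}_j$ be the unique set containing $x_i$ and $x_j$ (property 4). I then split into two subcases: if $x_3 \in C_{12}$, then $C_{12} = C_{13} = C_{23}$ by uniqueness, and every $C \in \mathcal{C}_j$ containing $\{x_1, x_2\}$ must equal $C_{12}$ and therefore contain $x_3$, so the trace $\{x_1, x_2\}$ is not realizable; if $x_3 \notin C_{12}$, then no $C \in \mathcal{C}_j$ contains all three, because containing $\{x_1, x_2\}$ forces $C = C_{12}$, which excludes $x_3$, so the full trace $\{x_1, x_2, x_3\}$ is not realizable. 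Either way, $\{x_1, x_2, x_3\}$ is not shattered.

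No single step looks delicate; the only point requiring minor care is verifying, via the uniqueness clause (property 4) together with the counts in properties 1--3, that the sets produced in the lower-bound step indeed exist (which needs $q_1 \ge 2$, automatic since $q_1$ is prime). The whole argument is purely combinatorial and uses no measure theory; the passage from $\mathrm{VC}$-dimension to $\gamma$-dimension for $0 < \gamma < 1$ is immediate since $\mathcal{C}$ consists of indicators, so shattering with levels $(\alpha, \beta) = (0, 1)$ (or any $0 < \alpha < \beta < 1$) coincides with classical shattering.
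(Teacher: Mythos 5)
Your proposal is correct and follows essentially the same route as the paper's proof: the upper bound splits into the ``different blocks'' case and the ``same block'' case (resolved by the uniqueness of lines through two points), while the lower bound shatters a pair within one block using the projective-plane properties. The only difference is that you flesh out the lower bound with an explicit inclusion-exclusion count (which the paper leaves to the reader, and which could be shortcut by taking a set in a different block to realize $\varnothing$); that detail is correct.
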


\begin{proof}
Choose any three distinct points $n_1,n_2,n_3\in\mathbb{N}$.  If two of 
these points are in distinct intervals $N_j$, then no set in $\mathcal{C}$ 
contains both points.  On the other hand, suppose that all three points 
are in the same interval $N_j$.  Then by the properties of the finite 
projective plane, either there is no set in $\mathcal{C}$ that contains 
all three points, or there is no set that contains two of the points but 
not the third (as each pair of points must lie in a unique set in 
$\mathcal{C}$).  Thus we have shown that no family of three points 
$\{n_1,n_2,n_3\}$ is $\gamma$-shattered for $0<\gamma<1$.  On the other 
hand, it is easily seen that the properties of the finite projective 
plane imply that any pair of points $\{n_1,n_2\}$ belonging to the same 
interval $N_j$ is $\gamma$-shattered for $0<\gamma<1$.
\end{proof}

\subsection{Proof of Proposition \ref{prop:vc}}

The following crude lemma yields lower bounds on the bracketing numbers.

\begin{lem}
\label{lem:crude}
Let $\mu$ be a probability measure on $\mathbb{N}$.  Then
$$
	\inf_{\card\pi\le 3^N}\sup_{C\in\mathcal{C}}
	\mu(\partial_\pi C)>\varepsilon
	\qquad\mbox{implies}\qquad
	N_{[]}(\mathcal{C},\varepsilon,\mu)>N,
$$
where the infimum ranges over all partitions of $\mathbb{N}$ with
$\card\pi\le 3^N$.
\end{lem}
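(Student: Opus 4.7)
The plan is to prove the contrapositive: assume $N_{[]}(\mathcal{C},\varepsilon,\mu)\le N$ and construct a partition $\pi$ of $\mathbb{N}$ with $\card\pi\le 3^N$ and $\sup_{C\in\mathcal{C}}\mu(\partial_\pi C)\le \varepsilon$. The idea is that each bracket $[f_i^-,f_i^+]$ in an $L^1(\mu)$-cover naturally splits $\mathbb{N}$ into three pieces (definitely-in, definitely-out, uncertain), and the combined three-way partitions generate the desired $\pi$ with at most $3^N$ atoms.

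Concretely, fix a cover by $\varepsilon$-brackets $[f_i^-,f_i^+]$, $i=1,\dots,N$, of the class $\{\mathbf{1}_C:C\in\mathcal{C}\}$, and without loss of generality assume each bracket contains at least one indicator. For each $i$ define
\[
A_i=\{f_i^->0\},\qquad B_i=\{f_i^+<1\},\qquad U_i=\mathbb{N}\setminus(A_i\cup B_i).
\]
A direct verification shows that $A_i,B_i,U_i$ partition $\mathbb{N}$: if $n\in A_i\cap B_i$ then no $\mathbf{1}_C$ could satisfy $f_i^-\le\mathbf{1}_C\le f_i^+$ at $n$, contradicting nonemptiness of the bracket. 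Moreover, on $U_i$ one has $f_i^+-f_i^-\ge 1$, so $\mu(U_i)\le\mu(f_i^+-f_i^-)\le\varepsilon$. Let $\pi$ be the common refinement of the $N$ three-element partitions $\{A_i,B_i,U_i\}$; then $\card\pi\le 3^N$.

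To bound $\mu(\partial_\pi C)$, fix any $C\in\mathcal{C}$ and choose $i$ with $f_i^-\le\mathbf{1}_C\le f_i^+$. The key observation is that atoms of $\pi$ contained in $A_i$ lie entirely inside $C$ (because $f_i^->0$ forces $\mathbf{1}_C=1$) and atoms contained in $B_i$ lie entirely outside $C$ (because $f_i^+<1$ forces $\mathbf{1}_C=0$). Since every atom of $\pi$ is contained in exactly one of $A_i,B_i,U_i$, the only atoms that can intersect both $C$ and its complement must lie in $U_i$. Therefore $\partial_\pi C\subseteq U_i$, whence $\mu(\partial_\pi C)\le\varepsilon$. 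Taking the supremum over $C$ and comparing with the hypothesis yields the contradiction that finishes the proof.

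The argument is essentially bookkeeping once one sees the right decomposition of a bracket, so I do not expect any serious obstacle; the only delicate point is verifying the \emph{partition} property $A_i\cap B_i=\emptyset$, which is precisely where one uses that each bracket actually contains some indicator $\mathbf{1}_C$ (a harmless reduction on the cover). The appearance of the factor $3^N$ rather than $2^N$ is exactly the cost of the "uncertain" third piece $U_i$, which is what the bracket width $\varepsilon$ controls.
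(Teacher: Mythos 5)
Your proof is correct and follows essentially the same route as the paper's: pass to the contrapositive, take the common refinement $\pi$ of the $N$ three-element partitions induced by each bracket (your $\{A_i,U_i,B_i\}$ is the paper's $\{C_i^-,\,C_i^+\setminus C_i^-,\,\mathbb{N}\setminus C_i^+\}$), observe $\card\pi\le 3^N$, and show $\partial_\pi C\subseteq U_i$ so $\mu(\partial_\pi C)\le\varepsilon$. The only (minor, and arguably clarifying) difference is that you start from general function-brackets $[f_i^-,f_i^+]$ and extract the three-way set decomposition, whereas the paper directly takes the brackets to be pairs of sets $C_i^-\subseteq C_i^+$ without comment.
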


\begin{proof}
Suppose $N_{[]}(\mathcal{C},\varepsilon,\mu)\le N$.  Then there
are $k\le N$ pairs $\{C_i^+,C_i^-\}_{i\le k}$ of subsets of 
$\mathbb{N}$ such that $\mu(C_i^+\backslash C_i^-)\le\varepsilon$
for all $1\le i\le k$, and for every $C\in\mathcal{C}$, there exists
$1\le i\le k$ such that $C_i^-\subseteq X\subseteq C_i^+$.
Let $\pi$ be the partition generated by $\{C_i^+,C_i^-:1\le i\le k\}$.
Then $\card\pi\le 3^N$, as $\pi$ is the common refinement of at most $N$ 
partitions $\{C_i^-,C_i^+\backslash C_i^-,\mathbb{N}\backslash C_i^+\}$
of size three.  

Now choose any $C\in\mathcal{C}$, and choose $1\le i\le k$ such that 
$C_i^-\subseteq C\subseteq C_i^+$.  As $C_i^-$ and $\mathbb{N}\backslash 
C_i^+$ are unions of atoms of $\pi$ by construction, and as 
$C_i^-\subseteq C$ and $(\mathbb{N}\backslash C_i^+)\cap C=\varnothing$, 
we evidently have $\partial_\pi C \subseteq C_i^+\backslash C_i^-$.  Thus 
$\mu(\partial_\pi C)\le \varepsilon$.  As this holds for any 
$C\in\mathcal{C}$, we complete the proof by contradiction. 
\end{proof}

Denote by $\mu_j$ the uniform distribution on $N_j$.
Let $(p_j)_{j\in\mathbb{N}}$ be a sequence of nonnegative numbers
$p_j\ge 0$ so that $\sum_jp_j=1$, and define the probability measure
$$
	\mu = \sum_{j=1}^\infty p_j\mu_j.
$$
We first obtain a lower bound on $N_{[]}(\mathcal{C},\varepsilon,\mu)$.
Subsequently, we will be able to choose the sequence 
$(p_j)_{j\in\mathbb{N}}$ such that this bound grows arbitrarily quickly.

To obtain a lower bound, let us suppose that 
$N_{[]}(\mathcal{C},\varepsilon,\mu)\le N$.  Then applying Lemma 
\ref{lem:crude}, there exists a partition $\pi$ of $\mathbb{N}$ 
with $\card\pi\le 3^N$ such that
$$
	\sup_{j\in\mathbb{N}}p_j\min_{\card\pi'\le 3^N}
	\max_{C\in E_{q_j}}\frac{\card\partial_{\pi'}C}{m_j}
	\le
	\sup_{j\in\mathbb{N}}p_j\max_{C\in\mathcal{C}_j}\mu_j(\partial_\pi C)
	\le\sup_{C\in\mathcal{C}}\mu(\partial_\pi C)
	\le\varepsilon.
$$
By Proposition \ref{prop:alon},
$$
	\min_{\card\pi'\le 3^N}
        \max_{C\in E_{q_j}}\frac{\card\partial_{\pi'}C}{m_j}
	\le\frac{\varepsilon}{p_j}\quad\mbox{implies}\quad
	m_j^{1/4}\sqrt{1-\frac{\varepsilon}{p_j}\wedge 1}<3^{N}.
$$
Therefore, $N_{[]}(\mathcal{C},\varepsilon,\mu)\le N$ implies that
$$
	N>\frac{1}{4}\log_3 m_j
	+ \frac{1}{2}\log_3\left(1-\frac{\varepsilon}{p_j}\wedge 1\right)
$$
for every $j\in\mathbb{N}$.  It follows that
$$
	N_{[]}(\mathcal{C},\varepsilon,\mu)\ge\sup_{j\in\mathbb{N}}
	\left\lfloor
	\frac{1}{4}\log_3 m_j
	+ \frac{1}{2}\log_3\left(1-\frac{\varepsilon}{p_j}\wedge 1\right)
	\right\rfloor.
$$
This bound holds for any choice of $(p_j)_{j\in\mathbb{N}}$.

Fix $n(\varepsilon)\uparrow\infty$ as $\varepsilon\downarrow 0$.
We now choose $(p_j)_{j\in\mathbb{N}}$ such that
$N_{[]}(\mathcal{C},\varepsilon,\mu)\ge n(\varepsilon)$.
First, as $m_j\uparrow\infty$, we can choose a subsequence
$j(k)\uparrow\infty$ such that
$$
	m_{j(\lfloor \log_2(2/3\varepsilon)\rfloor)} \ge
	3^{4n(\varepsilon)+6}\qquad\mbox{for all }0<\varepsilon<1/3.	
$$
Now define $(p_j)_{j\in\mathbb{N}}$ as follows:
$$
	p_{j(k)}=2^{-k}\quad\mbox{for }k\in\mathbb{N},\qquad
	p_j=0\quad\mbox{for }j\not\in\{j(k):k\in\mathbb{N}\}.
$$
Then we clearly have, setting $J(\varepsilon)
=j(\lfloor \log_2(2/3\varepsilon)\rfloor)$,
$$
	N_{[]}(\mathcal{C},\varepsilon,\mu)\ge
	\left\lfloor
	\frac{1}{4}\log_3 m_{J(\varepsilon)}
	+ \frac{1}{2}\log_3\left(1-\frac{\varepsilon}{p_{J(\varepsilon)}
	}\wedge 1\right)
	\right\rfloor \ge
	\left\lfloor
	n(\varepsilon)+1
	\right\rfloor \ge n(\varepsilon)
$$
for all $0<\varepsilon<1/3$.   This completes the proof.

\begin{appendices}

\section{Boolean and stochastic independence}
\label{app:boole}

An essential property of a Boolean $\sigma$-independent sequence of sets 
is that there must exist a probability measure under which these sets are 
i.i.d.  This idea dates back to Marczewski \cite{Mar48}, who showed that 
such a probability measure exists on the $\sigma$-field generated by these 
sets.  For our purposes, we will need the resulting probability measure to 
be defined on the larger $\sigma$-field $\mathcal{X}$ of the underlying 
standard measurable space $(X,\mathcal{X})$.  One could apply an extension 
theorem for measures on standard measurable spaces (for example, \cite[p.\ 
194]{Var63}) to deduce the existence of such a measure from Marczewski's 
result.  However, a direct proof is easily given.

\begin{thm}
\label{thm:marczewski}
Let $(X,\mathcal{X})$ be a standard measurable space. Let
$(A_i,B_i)_{i\in\mathbb{N}}$ be a sequence of pairs of sets
$A_i,B_i\in\mathcal{X}$ such that $A_i\cap B_i=\varnothing$ for
every $i\in\mathbb{N}$ and
$$
	\bigcap_{j\in F}A_j\cap\bigcap_{j\not\in F}B_j
	\ne\varnothing\quad\mbox{for every }F\subseteq\mathbb{N}.
$$
Let $p\in[0,1]$.  Then there exists a probability measure $\mu$ on
$(X,\mathcal{X})$ such that $\mu(A_i)=\mu(X\backslash B_i)=p$
for every $i\in\mathbb{N}$, and such that $(A_i)_{i\in\mathbb{N}}$
are independent under $\mu$.
\end{thm}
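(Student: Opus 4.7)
The plan is to construct $\mu$ as the pushforward of the Bernoulli$(p)$ product measure on $\Omega:=\{0,1\}^{\mathbb{N}}$ under a measurable section of a natural Borel surjection $X\to\Omega$. Since $(X,\mathcal{X})$ is standard, I assume without loss of generality that $X$ is Polish and $\mathcal{X}=\mathrm{Bor}(X)$. First I would set
$$
Y := \bigcap_{i\in\mathbb{N}}(A_i\cup B_i)\in\mathcal{X},\qquad
\phi:Y\to\Omega,\qquad \phi(x)_i:=\mathbf{1}_{A_i}(x).
$$
Disjointness of $A_i$ and $B_i$ makes $\phi$ well-defined, and $\phi$ is Borel because each coordinate is. The hypothesis that $\bigcap_{j\in F}A_j\cap\bigcap_{j\notin F}B_j\ne\varnothing$ for every $F\subseteq\mathbb{N}$ says precisely that $\phi$ is surjective onto $\Omega$.

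Next, since $Y$ is Borel in the standard space $X$, $(Y,\mathcal{X}|_Y)$ is itself standard Borel, so $\phi$ is a Borel surjection between standard Borel spaces. I would invoke the Jankov--von Neumann uniformization theorem to produce a universally measurable section $s:\Omega\to Y$ with $\phi\circ s=\mathrm{id}_\Omega$. Letting $\nu$ denote the Bernoulli$(p)$ product measure on $\Omega$, universal measurability guarantees that $s^{-1}(E\cap Y)$ is $\nu$-measurable for every $E\in\mathcal{X}$, so
$$
\mu(E):=\nu\bigl(s^{-1}(E\cap Y)\bigr),\qquad E\in\mathcal{X},
$$
defines a Borel probability measure on $(X,\mathcal{X})$, concentrated on $Y$.

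The verification is then immediate. By construction $s(\omega)\in A_i$ iff $\phi(s(\omega))_i=1$ iff $\omega_i=1$, so $s^{-1}(A_i)=\{\omega:\omega_i=1\}$, giving $\mu(A_i)=\nu\{\omega_i=1\}=p$; since $\mu(A_i\cup B_i)=\mu(Y)=1$, also $\mu(B_i)=1-p$. Independence of $(A_i)_{i\in\mathbb{N}}$ under $\mu$ reduces to independence of the coordinate projections under $\nu$, which is immediate from the product structure.

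The main obstacle is the measurable section $s$. A Borel surjection between standard Borel spaces need not admit a Borel section, so one really needs Jankov--von Neumann (equivalently, the measurable uniformization of analytic sets) to obtain a universally measurable one; this is the only nonelementary ingredient, and standardness of $(X,\mathcal{X})$ enters precisely at this step. A more hands-on alternative, avoiding uniformization, would be to prescribe the finite-dimensional Bernoulli distributions on the algebra generated by $\{A_i,B_i:i\in\mathbb{N}\}$ restricted to $Y$ and invoke Carath\'eodory extension, with the required countable additivity supplied by inner regularity on the Polish space $Y$.
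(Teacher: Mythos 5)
Your proof is correct and takes essentially the same route as the paper's: both push forward the Bernoulli$(p)$ product measure on $\{0,1\}^{\mathbb{N}}$ along a universally measurable map that selects, for each $\omega$, a point in the nonempty fiber $\bigcap_{j:\omega_j=1}A_j\cap\bigcap_{j:\omega_j=0}B_j$. The paper obtains this selector by applying the measurable section theorem directly to the Borel set $\Gamma=\{(\omega,x):x\in H(\omega)\}$, whereas you obtain it as a Jankov--von Neumann section of the Borel surjection $\phi:Y\to\{0,1\}^{\mathbb{N}}$; since $\Gamma$ is exactly the inverse graph of $\phi$, these are the same argument in slightly different form.
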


\begin{proof}
Let $\mathcal{B}^*$ be the universal completion of the 
the Borel $\sigma$-field of $\{0,1\}^{\mathbb{N}}$, and let
$C_j=\{\omega\in\{0,1\}^{\mathbb{N}}:\omega_j=1\}$ for 
$j\in\mathbb{N}$. Moreover, let $\nu$ be the probability measure on 
$\mathcal{B}^*$ under which $(C_j)_{j\in\mathbb{N}}$ are independent and 
$\nu(C_j)=p$ for every $j\in\mathbb{N}$.

Define for every $\omega\in\{0,1\}^{\mathbb{N}}$ the set
$$
	H(\omega) = 
	\bigcap_{j:\omega_j=1}A_j\cap\bigcap_{j:\omega_j=0}B_j.
$$ 
It suffices to show that there is a measurable map 
$\iota:(\{0,1\}^{\mathbb{N}},\mathcal{B}^*) \to(X,\mathcal{X})$ such that 
$\iota(\omega)\in H(\omega)$ for every $\omega\in\{0,1\}^{\mathbb{N}}$.  
Indeed, as $\iota^{-1}(A_j)=C_j$ and 
$\iota^{-1}(B_j)=\{0,1\}^{\mathbb{N}}\backslash C_j$ for every 
$j\in\mathbb{N}$, the measure $\mu(\cdot)=\nu(\iota^{-1}(\cdot))$ has the 
desired properties. 

It remains to prove the existence of $\iota$.  To this end, note that
the set
$$
	\Gamma=\{(\omega,x):x\in H(\omega)\} =
	\bigcap_{j\in\mathbb{N}}\left\{
	C_j\times A_j\cup 
	\left(\{0,1\}^{\mathbb{N}}\backslash C_j\right)\times B_j
	\right\}
$$
is measurable
$\Gamma\in\mathcal{B}(\{0,1\}^{\mathbb{N}})\otimes\mathcal{X}$,
where $\mathcal{B}(\{0,1\}^{\mathbb{N}})$ denotes the Borel 
$\sigma$-field of $\{0,1\}^{\mathbb{N}}$.  As $H(\omega)$ is nonempty
for every $\omega\in\{0,1\}^{\mathbb{N}}$ by assumption, the
existence of $\iota$ now follows by the measurable section theorem
\cite[Theorem 8.5.3]{Cohn80}.
\end{proof}

\begin{rem}
In the above proof, the assumption that $(X,\mathcal{X})$ is standard
is required to apply the measurable section theorem.  When 
$(X,\mathcal{X})$ is an arbitrary measurable space, we could of course
invoke the axiom of choice to obtain a map $\iota:\{0,1\}^{\mathbb{N}}
\to X$ such that $\iota(\omega)\in H(\omega)$ for every 
$\omega\in\{0,1\}^{\mathbb{N}}$, but such a map need not be measurable
in general.
On the other hand, as $\iota^{-1}(A_j)=C_j$ and 
$\iota^{-1}(B_j)=\{0,1\}^{\mathbb{N}}\backslash C_j$, it follows
that $\iota$ is necessarily Borel-measurable if we choose
$\mathcal{X}=\sigma\{A_j,B_j:j\in\mathbb{N}\}$.  Thus we recover
a result along the lines of Marczewski by using the same proof.
\end{rem}

The proof of Theorem \ref{thm:scales} uses the following connection 
between Boolean independence and $\gamma$-shattering which is a trivial
modification of a result of Assouad \cite{Ass83} (cf.\ \cite[Theorem 
4.6.2]{Dud99}).  We give the proof for completeness.

\begin{lem}
\label{lem:assouad}
Let $\{f_1,\ldots,f_{2^n}\}$ be a finite family of functions 
on a set $X$ that is Boolean independent at levels $(\alpha,\beta)$
with $\beta-\alpha\ge\gamma$.  Then the family $\{f_1,\ldots,f_{2^n}\}$
$\gamma$-shatters some finite subset $\{x_1,\ldots,x_n\}\subseteq X$.
\end{lem}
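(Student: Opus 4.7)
The plan is to use a standard Assouad-type duality argument. The idea is to index the $2^n$ functions by binary vectors of length $n$, and then to use the Boolean independence to carve out $n$ points whose ``coordinate structure'' is precisely what is needed to shatter them.

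More concretely, I would first relabel the family as $\{f_v : v \in \{0,1\}^n\}$ via any bijection between $\{1,\ldots,2^n\}$ and $\{0,1\}^n$. The hypothesis of Boolean independence at levels $(\alpha,\beta)$ says that for every $F \subseteq \{0,1\}^n$ the set
$$
  \bigcap_{v \in F}\{f_v < \alpha\} \cap \bigcap_{v \notin F}\{f_v > \beta\}
$$
is nonempty, so we may select a point $x_F$ in it. For each coordinate $i \in \{1,\ldots,n\}$, I would then set $F_i := \{v \in \{0,1\}^n : v_i = 1\}$ and define $x_i := x_{F_i}$. Thus, by construction, $f_v(x_i) < \alpha$ whenever $v_i = 1$ and $f_v(x_i) > \beta$ whenever $v_i = 0$.

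It remains to verify that $\{x_1,\ldots,x_n\}$ is $\gamma$-shattered (using the same levels $\alpha < \beta$, which satisfy $\beta - \alpha \ge \gamma$). Given an arbitrary $G \subseteq \{1,\ldots,n\}$, I would choose the ``witness'' function to be $f_{v_G}$, where $v_G \in \{0,1\}^n$ is the indicator vector of $G$, i.e.\ $(v_G)_i = 1$ iff $i \in G$. For any $i \in G$ one has $(v_G)_i = 1$, so $v_G \in F_i$ and thus $f_{v_G}(x_i) < \alpha$; conversely, for $i \notin G$ one has $v_G \notin F_i$ and thus $f_{v_G}(x_i) > \beta$. This is precisely the shattering condition of Definition \ref{defn:shatter}.

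There is no real obstacle here beyond bookkeeping: the entire content is the combinatorial duality between indexing the $2^n$ functions by vertices of the hypercube $\{0,1\}^n$ and using the $n$ ``coordinate halfspaces'' $F_i$ to pick the $n$ shattered points. The one thing to be careful about is confirming that the map $G \mapsto v_G$ produces the right witness function for every $G$, which the construction above guarantees since the roles of ``point indexed by coordinate hyperplane'' and ``function indexed by point in the cube'' are exact duals of each other.
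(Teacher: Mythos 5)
Your proof is correct and is essentially the same as the paper's: the paper's explicit bijection $\ell(F)=1+\sum_{j\in F}2^{j-1}$ between subsets of $\{1,\dots,n\}$ and $\{1,\dots,2^n\}$ plays exactly the role of your relabeling by $v\in\{0,1\}^n$, and the paper's choice of $x_j$ in the intersection over $\{F\ni j\}$ versus $\{F\not\ni j\}$ is precisely your choice of $x_i:=x_{F_i}$ for the coordinate halfspace $F_i=\{v:v_i=1\}$. The verification of the shattering property is word-for-word the same duality computation.
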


\begin{proof}
Define $\ell(F)=1+\sum_{j\in F}2^{j-1}$ for $F\subseteq\{1,\ldots,n\}$,
so that $\ell(F)$ assigns to every $F\subseteq\{1,\ldots,n\}$ a unique
integer between $1$ and $2^n$.  Choose some point
$$
	x_j\in \bigcap_{F\ni j}\{f_{\ell(F)}<\alpha\}\cap
	\bigcap_{F\not\ni j}\{f_{\ell(F)}>\beta\}
$$
for every $j=1,\ldots,n$.
Then for any $F\subseteq\{1,\ldots,n\}$, we have
$f_{\ell(F)}(x_j)<\alpha$ if $j\in F$ and 
$f_{\ell(F)}(x_j)>\beta$ if $j\not\in F$.
Therefore $\{x_1,\ldots,x_n\}$ is $\gamma$-shattered.
\end{proof}

\section{Decomposition theorems}
\label{app:decomp}

Part of the proof of Corollary \ref{cor:uniformity} relies on the
decomposition of stochastic processes with respect to
the invariant and tail $\sigma$-fields.  These theorems will be
given presently.

The first theorem is the well-known ergodic decomposition.  As this result 
is classical, we state it here without proof (see \cite[Theorem 
6.6]{Var01} or \cite[Theorem 10.26]{Kal02}, for example, for elementary 
proofs).  In the following, for any standard space $(Y,\mathcal{Y})$, we 
denote by $\mathcal{P}(Y,\mathcal{Y})$ the space of probability measures 
on $(Y,\mathcal{Y})$.  The space $\mathcal{P}(Y,\mathcal{Y})$ is endowed 
with the $\sigma$-field generated by the evaluation mappings 
$\pi_B:\mu\mapsto\mu(B)$, $B\in\mathcal{Y}$.  Recall that if 
$(X,\mathcal{X})$ is standard, then so is 
$(X^{\mathbb{N}},\mathcal{X}^{\otimes\mathbb{N}})$.

\begin{thm}
\label{thm:ergdecomp}
Let $(X,\mathcal{X})$ be a standard space, and denote by
$(Z_n)_{n\in\mathbb{N}}$ the canonical process on the space
$(X^{\mathbb{N}},\mathcal{X}^{\otimes\mathbb{N}})$.
Let $\mu\in\mathcal{P}(X^{\mathbb{N}},\mathcal{X}^{\otimes\mathbb{N}})$
be a stationary probability measure.  Then there exists a probability 
measure $\rho$ on  
$\mathcal{P}(X^{\mathbb{N}},\mathcal{X}^{\otimes\mathbb{N}})$ such that
$$
	\mu(A) = \int \nu(A)\,\rho(d\nu)
	\quad\mbox{for every }
	A\in\mathcal{X}^{\otimes\mathbb{N}},
$$
and such that there exists a measurable subset $B$ of\/
$\mathcal{P}(X^{\mathbb{N}},\mathcal{X}^{\otimes\mathbb{N}})$
with $\rho(B)=1$ and with the property that
every $\nu\in B$ is stationary and ergodic.
\end{thm}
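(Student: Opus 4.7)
The plan is to obtain the decomposition by disintegrating $\mu$ along the shift-invariant $\sigma$-field, which is possible precisely because $(X^{\mathbb{N}},\mathcal{X}^{\otimes\mathbb{N}})$ is standard. Let $T$ be the coordinate shift and $\mathcal{I}$ the $\sigma$-field of $T$-invariant events. By standardness there exists a regular conditional probability $\mu(\,\cdot\,|\mathcal{I}):X^{\mathbb{N}}\times\mathcal{X}^{\otimes\mathbb{N}}\to[0,1]$; set $K(\omega)(\,\cdot\,)=\mu(\,\cdot\,|\mathcal{I})(\omega)$. Then $K$ is a measurable map $X^{\mathbb{N}}\to\mathcal{P}(X^{\mathbb{N}},\mathcal{X}^{\otimes\mathbb{N}})$ (measurability into the latter reduces to that of the scalar maps $\omega\mapsto K(\omega)(A)$, which holds by construction), and I would take $\rho:=\mu\circ K^{-1}$. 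The mixture identity $\mu(A)=\int\nu(A)\,\rho(d\nu)$ then follows directly from the defining property of conditional expectation.

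Next I would verify that $K(\omega)$ is stationary for $\mu$-a.e.\ $\omega$. By $T$-invariance of $\mu$, for each fixed $A\in\mathcal{X}^{\otimes\mathbb{N}}$ one has $K(\omega)(T^{-1}A)=K(\omega)(A)$ for $\mu$-a.e.\ $\omega$. Since $\mathcal{X}^{\otimes\mathbb{N}}$ is countably generated, a single $\mu$-null exceptional set handles this simultaneously on a generating $\pi$-system, and a monotone-class argument extends $T$-invariance of $K(\omega)$ to all of $\mathcal{X}^{\otimes\mathbb{N}}$.

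The main obstacle is ergodicity: one needs, for $\mu$-a.e.\ $\omega$, that $K(\omega)(A)\in\{0,1\}$ for every $A\in\mathcal{I}$, whereas $\mathcal{I}$ is not generally countably generated, so the per-$A$ null set cannot be naively patched. The plan is to fix a countable generating algebra $\mathcal{A}\subseteq\mathcal{X}^{\otimes\mathbb{N}}$ and apply Birkhoff's theorem under $\mu$ to each $\mathbf{1}_A$, $A\in\mathcal{A}$ (permissible off a single null set by countability), giving $\frac{1}{n}\sum_{k=0}^{n-1}\mathbf{1}_A\circ T^k\to K(\,\cdot\,)(A)$ $\mu$-a.s. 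Since $K$ is $\mathcal{I}$-measurable, the $\mathcal{I}$-measurable function $\omega'\mapsto K(\omega')(A)$ equals $K(\omega)(A)$ for $K(\omega)$-a.e.\ $\omega'$ by the disintegration property (any $\mathcal{I}$-measurable function is $K(\omega)$-a.s.\ constant with value determined by $\omega$). Transporting the Birkhoff limit via the mixture identity $\mu=\int K(\omega)\,d\mu(\omega)$ and invoking Birkhoff under $K(\omega)$ itself then shows $\mathbf{E}_{K(\omega)}[\mathbf{1}_A|\mathcal{I}]$ is $K(\omega)$-a.s.\ constant for each $A\in\mathcal{A}$, and a monotone-class argument extends this to all of $\mathcal{X}^{\otimes\mathbb{N}}$. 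Thus $\mathcal{I}$ is $K(\omega)$-trivial, i.e., $K(\omega)$ is ergodic. Finally, the set $B\subseteq\mathcal{P}(X^{\mathbb{N}},\mathcal{X}^{\otimes\mathbb{N}})$ of stationary ergodic measures is measurable, since both properties are cut out by countably many evaluation maps $\pi_A$ applied to elements of a countable generator, and the above yields $\rho(B)=1$.
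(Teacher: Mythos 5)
The paper states Theorem~\ref{thm:ergdecomp} without proof, citing textbook references (Varadhan, Kallenberg); there is therefore no in-paper argument to compare against. Your disintegration proof via the regular conditional probability $K=\mu(\cdot\mid\mathcal{I})$ is correct and is in essence the standard proof given in those references, and you correctly isolate the genuine subtlety (that $\mathcal{I}$ is in general not countably generated) and route around it with Birkhoff's theorem applied to a countable generating algebra.

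Two places deserve tighter phrasing, though neither is a real gap. First, the parenthetical ``any $\mathcal{I}$-measurable function is $K(\omega)$-a.s.\ constant with value determined by $\omega$'' is false as a simultaneous statement over all $\mathcal{I}$-measurable $g$ — that would force $\mathcal{I}$ to be essentially countably generated — and the $\mu$-null set on which it holds depends on $g$. What is true, and what you actually use, is the per-function version: since $\mathbf{E}_\mu[g\mid\mathcal{I}]=g$ and $\mathbf{E}_\mu[g^2\mid\mathcal{I}]=g^2$ $\mu$-a.s., the variance of $g$ under $K(\omega)$ vanishes for $\mu$-a.e.\ $\omega$. You apply this only to the countably many functions $g_A=K(\cdot)(A)$, $A\in\mathcal{A}$, so the argument goes through after taking a countable union of null sets. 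Second, ergodicity of $\nu$ is not directly ``$\pi_A(\nu)\in\{0,1\}$ for countably many $A$'' (those $A$ would have to range over $\mathcal{I}$); the clean way to exhibit a Borel set $B$ is to impose, on top of the stationarity equalities, the mean-mixing relations $\frac{1}{n}\sum_{k=0}^{n-1}\nu(T^{-k}A\cap C)\to\nu(A)\nu(C)$ for $A,C$ in the countable generating algebra, which are Borel conditions on $\nu$ and characterize ergodicity among stationary laws. Your argument shows $\rho$-a.e.\ $\nu$ satisfies them, so $\rho(B)=1$ as required.
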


The second theorem is similar in spirit to Theorem \ref{thm:ergdecomp}, 
where we now decompose with respect to the tail $\sigma$-field rather than 
with respect to the invariant $\sigma$-field.  This result is closely 
related to the decomposition theorem for Gibbs measures (see, for example, 
\cite{Dyn78}).  For completeness, we provide a self-contained proof.

\begin{thm}
\label{thm:taildecomp}
Let $(\Omega,\mathcal{G},\mu)$ be a standard probability space. Let
$(\mathcal{G}_{-n})_{n\in\mathbb{N}}$ be a reverse filtration
with each $\mathcal{G}_{-n}\subseteq\mathcal{G}$ countably
generated.  Fix for every $n\in\mathbb{N}$ a version $\mu_{-n}$ of the 
regular conditional probability $\mu(\,\cdot\,|\mathcal{G}_{-n})$. Then 
there exists a probability measure $\rho$ on
$\mathcal{P}(\Omega,\mathcal{G})$ such that
$$
	\mu(A) = \int \nu(A)\,\rho(d\nu)
	\quad\mbox{for every }
	A\in\mathcal{G},
$$
and such that there is a measurable subset $B$ of\/
$\mathcal{P}(\Omega,\mathcal{G})$ with $\rho(B)=1$ and
\begin{enumerate}
\item 
The tail $\sigma$-field $\mathcal{G}_{-\infty}=\bigcap_n\mathcal{G}_{-n}$
is $\nu$-trivial for every $\nu\in B$.
\item $\nu(A|\mathcal{G}_{-n})=\mu_{-n}(A)$ $\nu$-a.s.\ for every
$\nu\in B$, $A\in\mathcal{G}$, and $n\in\mathbb{N}$.
\end{enumerate}
\end{thm}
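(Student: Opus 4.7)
The plan is to construct $\rho$ as the law of the random probability measure $\omega\mapsto\mu_{-\infty}(\omega,\cdot)$, where $\mu_{-\infty}$ is a fixed regular conditional probability for $\mu$ given the tail $\sigma$-field $\mathcal{G}_{-\infty}$; such $\mu_{-\infty}$ exists because $(\Omega,\mathcal{G},\mu)$ is standard. The map $\Phi:\omega\mapsto\mu_{-\infty}(\omega,\cdot)$ into $\mathcal{P}(\Omega,\mathcal{G})$ is measurable because the evaluations $\pi_A\circ\Phi=\mu_{-\infty}(\cdot,A)$ are measurable for all $A\in\mathcal{G}$. Setting $\rho:=\Phi_*\mu$, the disintegration identity $\mu(A)=\int\nu(A)\,\rho(d\nu)$ is immediate from change of variables and the defining property of conditional expectation.

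Next I would verify condition (2). Fix $A\in\mathcal{G}$ and $B\in\mathcal{G}_{-n}$; since $\mathbf{1}_B\mu_{-n}(\cdot,A)=\mu_{-n}(\cdot,A\cap B)$ (using $B\in\mathcal{G}_{-n}$), the tower property yields
\begin{equation*}
\int_B\mu_{-n}(\omega',A)\,\mu_{-\infty}(\omega,d\omega')=\mu_{-\infty}(\omega,A\cap B)\qquad\mu\text{-a.s.}
\end{equation*}
Since $\mathcal{G}$ is countably generated (by standardness) and each $\mathcal{G}_{-n}$ is countably generated (by hypothesis), I intersect the countably many exceptional $\mu$-null sets as $A,B$ range over countable generating algebras of $\mathcal{G}$ and $\mathcal{G}_{-n}$ respectively and $n\in\mathbb{N}$ varies, giving a single $\mu$-null set outside of which the identity above holds for all such triples. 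Monotone class then extends (2) to arbitrary $A\in\mathcal{G}$ and $B\in\mathcal{G}_{-n}$.

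For tail-triviality (condition (1)), for each fixed $C\in\mathcal{G}_{-\infty}$ the function $\mathbf{1}_C$ is itself a version of $\mathbf{E}(\mathbf{1}_C|\mathcal{G}_{-\infty})$, hence $\nu_\omega(C)=\mathbf{1}_C(\omega)\in\{0,1\}$ $\mu$-a.s. The hard part will be producing a single $\mu$-null set outside of which this holds simultaneously for all $C\in\mathcal{G}_{-\infty}$, since $\mathcal{G}_{-\infty}$ need not be countably generated even when each $\mathcal{G}_{-n}$ is. I would resolve this obstacle using the fact that on a standard probability space every sub-$\sigma$-field is essentially countably generated modulo $\mu$-null sets (a separability argument in $L^2(\mathcal{G}_{-\infty},\mu)$): picking a countably generated $\mathcal{H}_0\subseteq\mathcal{G}_{-\infty}$ whose $\mu$-completion coincides with that of $\mathcal{G}_{-\infty}$, the fixed-$C$ argument applied to a countable generating class of $\mathcal{H}_0$ gives the uniform statement, and one then passes from $\mathcal{H}_0$ to $\mathcal{G}_{-\infty}$ via symmetric differences (using that $\nu_\omega(E)=0$ for $\mu$-a.e.\ $\omega$ whenever $\mu(E)=0$). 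The set $B\subseteq\mathcal{P}(\Omega,\mathcal{G})$ is finally defined by the countably many measurable conditions encoding (1) and (2), and $\rho(B)=1$ follows from the preceding steps.
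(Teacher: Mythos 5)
Your construction of $\rho$ as the pushforward of $\mu$ under $\omega\mapsto\mu_{-\infty}^\omega$, and your proof of condition~(2) via the tower property applied to countable generating algebras of $\mathcal{G}$ and of each $\mathcal{G}_{-n}$ followed by a monotone class argument, coincide with the paper's proof.

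Your proof of condition~(1), however, departs from the paper and has a genuine gap at the ``transfer via symmetric differences'' step. You correctly observe that $\mathcal{G}_{-\infty}$ need not be countably generated, pick a countably generated $\mathcal{H}_0\subseteq\mathcal{G}_{-\infty}$ with the same $\mu$-completion, and establish $\mathcal{H}_0$-triviality of $\nu_\omega$ on a single $\mu$-full set by sweeping a countable generating class. You then pass to general $E\in\mathcal{G}_{-\infty}$ by invoking $\nu_\omega(E\triangle H)=0$ where $H\in\mathcal{H}_0$ and $\mu(E\triangle H)=0$. But the fact you cite --- that $\mu(N)=0$ implies $\nu_\omega(N)=0$ for $\mu$-a.e.\ $\omega$ --- only holds with an exceptional null set depending on $N$; you would need it to hold on a \emph{single} $\mu$-full set of $\omega$ simultaneously for the (typically uncountable) family of null sets $N=E\triangle H_E$, $E\in\mathcal{G}_{-\infty}$. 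That uniform version is false: whenever the $\mathcal{G}_{-\infty}$-atom of a typical $\omega$ has $\mu$-measure zero (e.g.\ $\Omega=[0,1]^2$ with Lebesgue measure and $\mathcal{G}_{-\infty}=\sigma(\text{first coordinate})$, so that $\nu_\omega=\delta_{\omega_1}\otimes\text{Leb}$), every $\omega$ admits a $\mu$-null $N\in\mathcal{G}_{-\infty}$ --- its own atom --- with $\nu_\omega(N)=1$. The paper circumvents this entirely by proving the stronger fact that $\nu(A|\mathcal{G}_{-\infty})=\nu(A)$ $\nu$-a.s.\ for \emph{all} $A\in\mathcal{G}$, for $\rho$-a.e.\ $\nu$: one combines the already-established condition~(2) with reverse martingale convergence to write $\nu(A|\mathcal{G}_{-\infty})=\limsup_n\mu_{-n}(A)$ $\nu$-a.s., integrates this against $\rho$ and uses martingale convergence under $\mu$, and then runs the Dynkin/monotone class argument over a countable generating algebra of $\mathcal{G}$ --- never of $\mathcal{G}_{-\infty}$. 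Triviality of $\mathcal{G}_{-\infty}$ then drops out immediately, since for $E\in\mathcal{G}_{-\infty}$ one gets $\nu(E)=\nu(E|\mathcal{G}_{-\infty})=\mathbf{1}_E$ $\nu$-a.s. This route exploits the reverse filtration and the countable generation of each $\mathcal{G}_{-n}$, and needs no countability property of $\mathcal{G}_{-\infty}$ itself.
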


\begin{proof}
Let $\mu_{-\infty}$ be a version of the regular conditional probability 
$\mu(\,\cdot\,|\mathcal{G}_{-\infty})$, whose existence 
is guaranteed as $(\Omega,\mathcal{G})$ is standard.  We consider
$\mu_{-\infty}:\Omega\to\mathcal{P}(\Omega,\mathcal{G})$ as a 
$\mathcal{G}_{-\infty}$-measurable random probability measure 
$\omega\mapsto\mu_{-\infty}^\omega$ in the usual manner (e.g., 
\cite[Lemma 1.40]{Kal02}).  Let 
$\rho\in\mathcal{P}(\mathcal{P}(\Omega,\mathcal{G}))$ be the 
law under $\mu$ of the random measure $\mu_{-\infty}$.  It follows 
directly from the definition of regular conditional probability that
$$
	\mu(A) = \int \mu_{-\infty}^\omega(A)\,\mu(d\omega) =
	\int \nu(A)\,\rho(d\nu)
	\quad\mbox{for every }
	A\in\mathcal{G}.
$$
It remains to obtain a set $B$ with the two properties in the
statement of the theorem.

We begin with the second property.  Note that
\begin{multline*}
	\int |\nu(\mathbf{1}_C\mu_{-n}(A))-\nu(A\cap C)|\,\rho(d\nu)=\\
	\int |\mu(\mathbf{1}_C\mu(A|\mathcal{G}_{-n})|\mathcal{G}_{-\infty})
	-\mu(A\cap C|\mathcal{G}_{-\infty})|\,d\mu = 0
\end{multline*}
for every $n\in\mathbb{N}$, $A\in\mathcal{G}$,
and $C\in\mathcal{G}_{-n}$.  Let $\mathcal{G}_{-n}^0$ be a countable 
generating algebra for $\mathcal{G}_{-n}$ and let $\mathcal{G}^0$ 
be a countable generating algebra for $\mathcal{G}$.  Evidently
$$
	\int \mathbf{1}_C(\omega)\,\mu_{-n}^\omega(A)\,\nu(d\omega) =
	\nu(A\cap C)
	\quad\mbox{for every }n\in\mathbb{N},~
	A\in\mathcal{G}^0,~C\in\mathcal{G}_{-n}^0
$$
for all $\nu$ in a measurable subset $B_0$ of 
$\mathcal{P}(\Omega,\mathcal{G})$ with
$\rho(B_0)=1$.  But the monotone class theorem allows to extend this
identity to all $A\in\mathcal{G}$ and
$C\in\mathcal{G}_{-n}$.  Thus we have
$\nu(A|\mathcal{G}_{-n})=\mu_{-n}(A)$ $\nu$-a.s.\ for every
$\nu\in B_0$, $A\in\mathcal{G}$, and $n\in\mathbb{N}$.

We now proceed to the first property.  For any
$A\in\mathcal{G}$, we have
\begin{multline*}
	\int \nu(\nu(A|\mathcal{G}_{-\infty})=\nu(A))\,\rho(d\nu) =
	\int 
	\nu\bigg(\limsup_{n\to\infty}\mu_{-n}(A)=\nu(A)\bigg)\,\rho(d\nu) =
	\\
	\mu\bigg(\limsup_{n\to\infty}\mu_{-n}(A)=
	\mu(A|\mathcal{G}_{-\infty})\bigg)=1,
\end{multline*}
where we have used the martingale convergence theorem and the previously
established fact that $\nu(\mu_{-n}(A)=\nu(A|\mathcal{G}_{-n})
~\mbox{for all }n\in\mathbb{N})=1$ for $\rho$-a.e.\ $\nu$.
Therefore, it follows that $\nu(A|\mathcal{G}_{-\infty})=\nu(A)$ 
$\nu$-a.s.\ for all $A\in\mathcal{G}^0$ 
for every $\nu$ in a measurable
subset $B_1$ of $\mathcal{P}(\Omega,\mathcal{G})$
with $\rho(B_1)=1$.  By the monotone class theorem
$\nu(A|\mathcal{G}_{-\infty})=\nu(A)$ $\nu$-a.s.\ for every $\nu\in B_1$
and $A\in \mathcal{G}$.  But then evidently 
$\mathcal{G}_{-\infty}$ is $\nu$-trivial for every $\nu\in B_1$.  Choosing 
$B=B_0\cap B_1$ completes the proof.
\end{proof}

\section{Counterexamples in nonstandard spaces}
\label{sec:counter}

The assumption that $(X,\mathcal{X})$ is standard is used in the proof of 
Theorem \ref{thm:main} to establish the implications $1,3\Rightarrow 4$ 
and $4\Rightarrow 2$.  The goal of this appendix is to show that these 
implications may indeed fail when $(X,\mathcal{X})$ is not standard.  To 
this end we provide two counterexamples, based on the following simple 
observation.

\begin{lem}
\label{lem:aleph}
There exists a Boolean $\sigma$-independent sequence of functions on
a set $X$ if and only if $\card X\ge 2^{\aleph_0}$.
\end{lem}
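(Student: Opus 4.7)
The plan is to prove both directions by exhibiting an explicit correspondence between Boolean $\sigma$-independent sequences of functions on $X$ and injections of $\{0,1\}^{\mathbb{N}}$ into $X$.

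For the forward direction, suppose $(f_i)_{i\in\mathbb{N}}$ is Boolean $\sigma$-independent at some levels $(\alpha,\beta)$ with $\alpha<\beta$. By Definition \ref{defn:marcz}, for every $F\subseteq\mathbb{N}$ the intersection
$$
	H(F):=\bigcap_{j\in F}\{f_j<\alpha\}\cap\bigcap_{j\notin F}\{f_j>\beta\}
$$
is nonempty, so using the axiom of choice I pick $x_F\in H(F)$ for each $F\subseteq\mathbb{N}$. The map $F\mapsto x_F$ is injective: if $F\ne F'$, there exists $j$ lying in exactly one of them, and then one of $f_j(x_F),f_j(x_{F'})$ is less than $\alpha$ while the other exceeds $\beta$, so $x_F\ne x_{F'}$. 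Hence $\card X\ge \card \mathcal{P}(\mathbb{N})=2^{\aleph_0}$.

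For the converse, suppose $\card X\ge 2^{\aleph_0}$. Fix an injection $\iota:\{0,1\}^{\mathbb{N}}\to X$ and, for each $j\in\mathbb{N}$, define a function $f_j:X\to\{0,1\}$ by setting $f_j(\iota(\omega))=\omega_j$ for $\omega\in\{0,1\}^{\mathbb{N}}$ and extending arbitrarily (say, $f_j\equiv 0$) on $X\setminus\iota(\{0,1\}^{\mathbb{N}})$. For any $F\subseteq\mathbb{N}$, let $\omega\in\{0,1\}^{\mathbb{N}}$ be the indicator of $\mathbb{N}\setminus F$, i.e.\ $\omega_j=0$ for $j\in F$ and $\omega_j=1$ for $j\notin F$. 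Then $\iota(\omega)$ belongs to $\bigcap_{j\in F}\{f_j<1/3\}\cap\bigcap_{j\notin F}\{f_j>2/3\}$, so $(f_j)_{j\in\mathbb{N}}$ is Boolean $\sigma$-independent at levels $(1/3,2/3)$.

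Both directions are essentially bookkeeping once one sees the correspondence, and there is no real obstacle; the statement is really just repackaging the combinatorial content of Definition \ref{defn:marcz}.
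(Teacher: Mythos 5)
Your proof is correct and takes essentially the same approach as the paper: the forward direction picks one point from each of the pairwise disjoint sets $H(F)$ (the paper states disjointness directly; you rephrase it as injectivity of the choice map), and the converse uses an injection $\iota:\{0,1\}^{\mathbb{N}}\to X$ to build indicator-type functions, exactly as in the paper where $f_j=\mathbf{1}_{C_j}$ with $C_j=\iota(\{\omega:\omega_j=1\})$.
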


\begin{proof}
Suppose there exists a Boolean $\sigma$-independent sequence
$(f_j)_{j\in\mathbb{N}}$ of functions $f_j:X\to\mathbb{R}$.
Then there exist $\alpha<\beta$ such that for every
$F\subseteq\mathbb{N}$, the set
$$
	\bigcap_{j\in F}\{f_j<\alpha\}\cap\bigcap_{j\not\in F}
	\{f_j>\beta\}
$$
contains at least one point.  As these sets are disjoint for distinct
$F\subseteq\mathbb{N}$, and there are $2^{\aleph_0}$ subsets of 
$\mathbb{N}$, it follows that $\card X\ge 2^{\aleph_0}$.  Conversely, if 
$\card X\ge 2^{\aleph_0}$, there exists an injective map 
$\iota:\{0,1\}^{\mathbb{N}}\to X$.  Define the sets
$C_j=\{\iota(\omega):\omega\in\{0,1\}^{\mathbb{N}},~
\omega_j=1\}\subset X$.  Then the sequence
$(\mathbf{1}_{C_j})_{j\in\mathbb{N}}$ is Boolean $\sigma$-independent.
\end{proof}

Both examples below are consistent with the usual axioms of set theory 
(that is, the set theory ZFC) but depend on additional set-theoretic 
axioms.  I do not know whether it is possible to obtain counterexamples in 
the absence of additional axioms.

\subsection{An example where $1,3\not\Rightarrow 4$}

Let $X$ be an uncountable Polish space, and let $\mathcal{X}$ be the 
universal completion of its Borel $\sigma$-field.  Then $(X,\mathcal{X})$ 
is certainly not a standard measurable space. It is known, see 
Sierpi{\'n}ski and Szpilrajn \cite{SS36}, that there exists a set 
$A\in\mathcal{X}$ with $\card A=\aleph_1$ that is \emph{universally null}, 
that is, $\mu(A)=0$ for every nonatomic probability measure $\mu$ on 
$\mathcal{X}$.  As every subset $C\subseteq A$ is in the $\mu$-completion 
of the Borel $\sigma$-field of $X$ for every probability measure 
$\mu$, it follows that $C\in\mathcal{X}$ for every $C\subseteq A$.

As is noted by Dudley, Gin{\'e} and Zinn \cite[p.\ 494]{DGZ91}, the family 
of indicators $\mathcal{F}_A=\{\mathbf{1}_C:C\subseteq A\}$ is a universal 
Glivenko-Cantelli class.  Moreover, as $A$ is a $\mu$-null set for every 
nonatomic probability measure, it is evident that 
$N(\mathcal{F}_A,\varepsilon,\mu)=N(\mathcal{F}_A,\varepsilon,\mu_{\rm 
at})<\infty$ for every $\varepsilon>0$ and 
probability measure $\mu$, where $\mu_{\rm at}$ denotes the atomic part 
of $\mu$.  But assuming the continuum hypothesis, we have 
$\card A=2^{\aleph_0}$ and therefore $\mathcal{F}_A$ contains a Boolean 
$\sigma$-independent sequence $\mathcal{F}$ by Lemma \ref{lem:aleph}. 
Clearly $\mathcal{F}$ is a separable uniformly bounded family of 
measurable functions on $(X,\mathcal{X})$ for which the implications 
$1,3\Rightarrow 4$ of Theorem \ref{thm:main} fail.

\begin{rem} 
The existence of a universally null set does not require the 
continuum hypothesis: Sierpi{\'n}ski and Szpilrajn \cite{SS36} 
construct such a set in ZFC (the construction follows directly
from Hausdorff \cite{Hau36}, see also \cite[Theorem 
1.2]{Lav76}).  Nonetheless, the present counterexample 
does depend on the continuum hypothesis and may fail in its absence.  
Indeed, there exist models of the set theory ZFC in which every 
universally null set has cardinality strictly less than $2^{\aleph_0}$, 
see Laver \cite[p.\ 152]{Lav76}, Miller \cite[pp.\ 577--578]{Mil83}, or 
Ciesielski and Pawlikowski \cite[p.\ xii and Theorem 1.1.4]{CP04}.  In 
such a model, $\mathcal{F}_A$ cannot contain a Boolean 
$\sigma$-independent sequence by Lemma \ref{lem:aleph}.
\end{rem}

\subsection{An example where $4\not\Rightarrow 2$}

The present counterexample follows from the following
result that is proved below.

\begin{prop}
\label{prop:cardindep}
It is consistent with the set theory ZFC that there exists a probability 
space $(X,\mathcal{X},\mu)$ with $\card X < 2^{\aleph_0}$ such that
there is a sequence of sets $(C_j)_{j\in\mathbb{N}}\subset\mathcal{X}$
that are independent under $\mu$ with $\mu(C_j)=1/2$ for every
$j\in\mathbb{N}$.
\end{prop}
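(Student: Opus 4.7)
My plan is to realize the desired probability space as a trace of the standard coin-tossing measure on a cleverly chosen subset of $\{0,1\}^{\mathbb{N}}$ of cardinality below the continuum, and to secure the existence of such a subset by importing a classical set-theoretic consistency result. Let $\nu$ denote the Bernoulli$(1/2)$ product measure on $(\{0,1\}^{\mathbb{N}},\mathcal{B})$, with $\mathcal{B}$ the Borel $\sigma$-field. Suppose we can produce $X\subseteq\{0,1\}^{\mathbb{N}}$ with $\card X<2^{\aleph_0}$ and $\nu^*(X)=1$. Define $\mathcal{X}:=\{B\cap X:B\in\mathcal{B}\}$ and $\mu(B\cap X):=\nu(B)$; this is well-defined because $B_1\cap X=B_2\cap X$ forces $B_1\triangle B_2$ to be disjoint from $X$ and therefore $\nu$-null. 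The coordinate cylinders $C_j:=\{x\in X:x_j=1\}$ are then independent under $\mu$ with $\mu(C_j)=1/2$, so such an $X$ delivers the proposition.

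It therefore suffices to find, in some model of ZFC, a subset of $\{0,1\}^{\mathbb{N}}$ of cardinality smaller than $2^{\aleph_0}$ having full $\nu$-outer measure. The minimum cardinality of a non-$\nu$-null subset of $\{0,1\}^{\mathbb{N}}$ is the cardinal characteristic $\mathrm{non}(\mathcal{N})$, and the statement $\mathrm{non}(\mathcal{N})<2^{\aleph_0}$ is a classical consistency result of set theory (realized, for instance, in the random real model, where $\mathrm{non}(\mathcal{N})=\aleph_1<\aleph_2=2^{\aleph_0}$; see Bartoszy\'nski and Judah). Fixing such a model, pick a non-null $Y\subseteq\{0,1\}^{\mathbb{N}}$ with $\card Y<2^{\aleph_0}$.

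It remains to inflate $Y$ from merely non-null to full outer measure without expanding its cardinality. Let $H\in\mathcal{B}$ be a Borel envelope of $Y$, so $\nu(H)=\nu^*(Y)>0$. Viewing $\{0,1\}^{\mathbb{N}}$ as the compact Polish group $(\mathbb{Z}/2\mathbb{Z})^{\mathbb{N}}$ with Haar measure $\nu$, I will choose translates $(c_i)_{i\in\mathbb{N}}$ so that $\nu(\bigcup_i(H+c_i))=1$: indeed, if the $c_i$ are i.i.d.\ $\nu$-distributed then by Fubini and translation invariance,
$$
  \mathbf{E}\bigg[\nu\bigg(\{0,1\}^{\mathbb{N}}\setminus
    \bigcup_{i=1}^{n}(H+c_i)\bigg)\bigg]
  =(1-\nu(H))^n\xrightarrow{n\to\infty}0,
$$
so some realization of $(c_i)$ works. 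Setting $X:=\bigcup_i(Y+c_i)$ gives $\card X\le\aleph_0\cdot\card Y<2^{\aleph_0}$, while $H+c_i$ is a Borel envelope of $Y+c_i$, so any Borel envelope of $X$ must contain $\bigcup_i(H+c_i)$ up to a null set, forcing $\nu^*(X)=1$.

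The single genuinely nontrivial ingredient is the consistency of $\mathrm{non}(\mathcal{N})<2^{\aleph_0}$, which must be imported from set theory and cannot be bypassed in ZFC alone; the remaining steps are routine measure theory on compact groups.
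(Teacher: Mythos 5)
Your proof is correct, and it takes a genuinely different route from the paper's. The paper works directly in $[0,1]$: it takes (via Martin--Solovay, Kunen, or Judah--Shelah) a set $X\subset[0,1]$ with $\card X<2^{\aleph_0}$ and $\lambda^*(X)>0$, puts on the trace $\sigma$-field the measure $\mu(A\cap X)=\lambda(A\cap\tilde X)/\lambda(\tilde X)$ where $\tilde X$ is a measurable cover of $X$ --- this normalization makes \emph{positive}, not full, outer measure suffice --- and then builds the independent sequence by a binary-splitting argument: the intermediate value theorem applied to $t\mapsto\lambda(A\cap\tilde X\cap[0,t])$ halves any trace set of positive measure, so one constructs a dyadic tree $\{A_\omega\}$ and takes $C_n=\bigcup_{\omega_n=0}A_\omega$. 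You instead move to the coin-tossing space $\{0,1\}^{\mathbb{N}}$, where the coordinate cylinders are already an independent sequence of $\nu$-measure $1/2$, so no splitting construction is needed at all. The cost is that under the unnormalized trace measure the cylinders carry measure $1/2$ only if $X$ has \emph{full} outer measure, which is exactly why you need the inflation step: translating a non-null $Y$ of small cardinality by countably many i.i.d.\ group elements, with the Fubini computation $(1-\nu(H))^n\to 0$ and the observation that measurable covers are preserved by translation, saturates the outer measure without increasing the cardinality. The consistency input is the same in both proofs, $\mathrm{non}(\mathcal{N})<2^{\aleph_0}$ (equivalently, a non-null set of reals of cardinality below the continuum), transported between $[0,1]$ and $\{0,1\}^{\mathbb{N}}$ by binary expansion. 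In short, you trade the paper's combinatorial binary-tree construction for a compact-group translation argument; both are elementary once the set-theoretic ingredient is granted.
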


This result easily yields the desired example.  Let $(X,\mathcal{X},\mu)$ 
and $(C_j)_{j\in\mathbb{N}}$ be as in Proposition \ref{prop:cardindep}, 
and define the class $\mathcal{F}=\{\mathbf{1}_{C_j}:j\in\mathbb{N}\}$.  
The proof of the implication $3\Rightarrow 4$ of Theorem \ref{thm:main} 
shows that $N_{[]}(\mathcal{F},\varepsilon,\mu)\ge 
N(\mathcal{F},\varepsilon,\mu)=\infty$ for $\varepsilon>0$ sufficiently 
small.  On the other hand, $\mathcal{F}$ cannot contain a Boolean 
$\sigma$-independent sequence by Lemma \ref{lem:aleph}. Thus $\mathcal{F}$ 
is a separable uniformly bounded family of measurable functions on 
$(X,\mathcal{X})$ for which the implication $4\Rightarrow 2$ of Theorem 
\ref{thm:main} fails.

\begin{rem}
It is clear that the present counterexample must depend on a model of set 
theory in which the continuum hypothesis fails.  Indeed, the set $X$ in
Proposition \ref{prop:cardindep} must be uncountable as it supports a 
(stochastically) independent sequence.  Therefore, if we assume the 
continuum hypothesis, then necessarily $\card X \ge 2^{\aleph_0}$ and 
we cannot guarantee the nonexistence of a Boolean $\sigma$-independent 
sequence.
\end{rem}

Denote by $\lambda$ the Lebesgue measure on $[0,1]$, and denote by
$\lambda^*$ the Lebesgue outer measure.  The proof of Proposition 
\ref{prop:cardindep} is based on the following remarkable fact: there 
exist models of the set theory ZFC in which there is a subset 
$X\subset[0,1]$ with $\card X<2^{\aleph_0}$ such that $\lambda^*(X)>0$;
see Martin and Solovay \cite[section 4.1]{MS70}, 
Kunen \cite[Theorem 3.19]{Kun84}, or Judah and Shelah \cite{JS90}.
The existence of such a set $X$ will be assumed in the proof of
Proposition \ref{prop:cardindep}.  Note that the set $X$ cannot be 
Lebesgue measurable (if $X$ were measurable it must contain a Borel set of 
positive measure, which has cardinality $2^{\aleph_0}$ by the Borel 
isomorphism theorem).

\begin{proof}[Proposition \ref{prop:cardindep}]
Assume a model of the set theory ZFC in which there exists a set
$X\subset[0,1]$ with $\card X<2^{\aleph_0}$ such that $\lambda^*(X)>0$.
Let $\mathcal{X}$ be the trace of the Borel $\sigma$-field of $[0,1]$
on $X$, that is, $\mathcal{X}=\{A\cap X:A\in\mathcal{B}([0,1])\}$.
Choose a measurable cover $\tilde X$ of $X$, and note that
$A\cap\tilde X$ is a measurable cover of $A\cap X$ whenever
$A\in\mathcal{B}([0,1])$.  We may therefore unambiguously define
$\mu(A\cap X)=\lambda(A\cap \tilde X)/\lambda(\tilde X)$ for 
$A\in\mathcal{B}([0,1])$, and it is easily verified that $\mu$ is a 
probability measure on $(X,\mathcal{X})$ whose definition does not depend 
on the choice of $\tilde X$.

We now claim the following: for every set $C\in\mathcal{X}$ with
$\mu(C)>0$, there exists a set $C'\in\mathcal{X}$, $C'\subset C$
with $\mu(C')=\mu(C)/2$.  Indeed, let $C=A\cap X$ for some
$A\in\mathcal{B}([0,1])$.  As the function $\phi:t\mapsto
\lambda(A\cap \tilde X\cap [0,t])$ is continuous and
$\phi(0)=0$, $\phi(1)=\lambda(A\cap \tilde X)$, there exists
by the intermediate value theorem
$0<s<1$ such that $\phi(s)=\lambda(A\cap \tilde X)/2$.
Therefore $C'=C\cap[0,s]$ yields the desired set.

Now inductively define for every $n\ge 1$ and
$\omega\in\{0,1\}^n$ a set $A_\omega\in\mathcal{X}$ as follows.
For $n=1$, choose a set $A_0\in\mathcal{X}$ such that
$\mu(A_0)=1/2$, and define $A_1=X\backslash A_0$.
For $n>1$, choose for every $\omega\in\{0,1\}^{n-1}$ a set
$A_{\omega 0}\in\mathcal{X}$ such that $A_{\omega 0}\subset A_{\omega}$
with $\mu(A_{\omega 0})=\mu(A_\omega)/2$, and define
$A_{\omega 1}=A_{\omega}\backslash A_{\omega 0}$.  Finally, define
for every $n\ge 1$ 
$$
	C_n=\bigcup_{\omega\in\{0,1\}^n:\omega_n=0}A_\omega.
$$
Then $\mu(C_n)=1/2$ for every $n\ge 1$, and $\mu(C_{i_1}\cap
\cdots\cap C_{i_k})=2^{-k}$ for every $k\ge 1$ and
$1\le i_1<i_2<\cdots<i_k$.  This evidently completes the proof.
\end{proof}

\end{appendices}

\subsection*{Acknowledgment}
The author would like to thank Terry Adams and Andrew Nobel for making 
available an early version of \cite{AN10b} and for interesting 
discussions on the topic of this paper.


\end{document}